\newcommand{\longhookrightarrow}{}
\DeclareRobustCommand{\longhookrightarrow}{\lhook\joinrel\relbar\joinrel\rightarrow}
\theoremstyle{plain}
\theoremstyle{definition}
\newtheorem{theorem}{Theorem}[section]
\newtheorem{proposition}[theorem]{Proposition}
\newtheorem{corollary}[theorem]{Corollary}
\newtheorem{definition}[theorem]{Definition}
\newtheorem{remark}[theorem]{Remark}
\newtheorem{example}[theorem]{Example}
\newtheorem{problem}{Problem}
\newtheorem*{thm*}{Theorem}
\newtheorem*{dfn*}{Definition}
\newtheorem*{cor*}{Corollary}
\newtheorem*{prp*}{Proposition}
\newtheorem*{rmk*}{Remark}
\newcommand{\R}{\mathbb{R}}
\def\XXint#1#2#3{{\setbox0=\hbox{$#1{#2#3}{\int}$ }
\vcenter{\hbox{$#2#3$ }}\kern-.6\wd0}}
\long\def\symbolfootnote[#1]#2{\begingroup%
\def\thefootnote{\fnsymbol{footnote}}\footnote[#1]{#2}\endgroup}
\begin{document}
\title[Linear extension between Lipschitz maps and Optimal Transport]{Linear extension operators between spaces of\\ Lipschitz maps and Optimal Transport}
\author[L. Ambrosio]{Luigi Ambrosio}
\address{Scuola Normale Superiore, Piazza Cavalieri 7, 56100, Pisa, ITALY}
\email{luigi.ambrosio@sns.it}
\author[D. Puglisi]{Daniele Puglisi}
\address{Dipartimento di Matematca e Informatica, viale A. Doria 6, Catania, ITALY}
\email{dpuglisi@dmi.unict.it}

\keywords{ Wasserstein distances, extension of Lipschitz functions, Free space}
\date{}

\begin{abstract}
Motivated by the notion of $K$-gentle partition of unity introduced in \cite{LN} and the notion of 
$K$-Lipschitz retract studied in \cite{Ohta}, we study a weaker notion related to
the Kantorovich-Rubinstein transport distance, that we call $K$-random projection.  We
show that $K$-random projections can still be used to provide linear extension operators for Lipschitz maps. We also prove that the existence of these
random projections is necessary and sufficient for the existence of weak$^*$ continuous operators. Finally we use this notion to characterize
the metric spaces $(X,d)$ such that the free space $\mathcal{F}(X)$ has the bounded approximation propriety. 

\end{abstract}

\maketitle

\symbolfootnote[0]{\textit{2010 Mathematics Subject
Classification:} Primary 26A16,  47B38}


\symbolfootnote[0]{The second author was supported by ``National Group for Algebraic and Geometric Structures, and their Applications'' (GNSAGA - INDAM).}

\section{Introduction}

Let $(X, d)$ be a metric space, and let $M$ be a non-empty subset of $X$. It is well-known that real-valued Lipschitz functions on $M$ 
can be extended to Lipschitz functions on $X$ with the same Lipschitz constant. Indeed,  in 1934 Mc Shane \cite{McS} observed that 
if $f: M \longrightarrow \mathbb{R}$ is a $L$-Lipschitz function then
\begin{equation}\label{McShane}
E(f)(x) = \inf\{f(m) + L d(x, m):  \ m \in M\}
\end{equation}
defines a $L$-Lipschitz function on $X$ extending $f$ (the largest function with this property). 
This extension formula has some drawbacks. Firstly, the map
$$f \longmapsto E(f)$$
is not linear. Secondly, it relies strongly on the fact that the target space is the real line. Indeed, when the target is infinite-dimensional, it cannot be used to provide Banach-space valued Lipschitz functions.  In \cite{Lind}, Lindenstrauss provided an example in which Banach-space valued Lipschitz functions do not admit extension. Recently, Lee and Naor \cite{LN}, provided a remarkable new method to extend Banach-space valued Lipschitz functions in a bounded linear way, via the so-called $K$-gentle partitions of unity. Among other things, they provided many examples of metric spaces on which there always exist $K$-gentle partitions of unity with respect to any subspace: doubling spaces, negatively curved manifolds, surfaces of bounded genus,... 

In this paper we study a weaker notion, called $K$-random projection on $M$, and prove that this weaker notion is still sufficent to
provide linear extension operators. {\em Strong} $K$-random projections, already introduced in \cite{Ohta} and called $K$-Lipschitz
retracts therein, are families $\{\upsilon_x\}_{x\in X}$ of probability measures in $M$
with finite first moment such that $\upsilon_x=\delta_x$ for
all $x\in M$ and $W_1(\upsilon_x,\upsilon_y)\leq K d(x,y)$ for all $x,\,y\in X$. The quantity $W_1$ used to measure the oscillation of the
$\upsilon_x$ is the well-known {\em Wasserstein distances} or {\em Kantorovich-Rubinstein} duality distance, widely used in Optimal
Transport and in many other fields \cite{Vil}. We prove in Theorem~\ref{thm:main} that every $K$-gentle partition of unity  induces in a natural way a strong $K$-random projection (see also Lemma~4.3 in \cite{Ohta}) and we investigate in Proposition~\ref{prop:cnes} cases when the procedure can be reversed. In this line of thought, it is natural to
define $K$-random projections by requiring only $\upsilon_x$ to be elements of the free space $\mathcal{F}(X)$ (also called
Arens-Eells space), and this weaker notion still
provides linear and weak$^*$ continuous extension operators (Theorem~\ref{extension thm}), when the natural dual topologies on the spaces of Lipschitz functions
are considered. We also prove in Theorem~\ref{characterization} that the existence of weak$^*$-weak$^*$ continuous extension operators is
equivalent to the existence of random projections (see also Proposition~\ref{prop:tuttolip}, in connection with the strong topologies, as well as
the ``finite extension property'' for Lipschitz maps in Corollary~\ref{cor:filter}).

In the final section, see Theorem~\ref{BAP}, we characterize Grothendieck's bounded approximation property of
$\mathcal{F}(X)$ in terms of  the existence of an {\em asymptotic} random projection. We would like to point out that  in the last decade many efforts have been done to establish whether or not $\mathcal{F}(X)$ has  the bounded approximation property. For instance, using the classical Enflo's example of separable Banach space without the approximation property \cite{E}, in \cite{GO} the authors were able to establish the existence of a compact metric space $K$ such that $\mathcal{F}(K)$ fails to have the approximation property.  This method was improved recently in \cite{HLP}. In \cite{HP} it was proved that 
the spaces $\mathcal{F}(\ell_1)$ and $\mathcal{F}(\mathbb{R}^N)$ have even more:  a Schauder basis, improving results previously proved in \cite{LP}. 

\medskip
{\bf Acknowledgement.} The paper was written while the second author was visiting the Scuola Normale Superiore. 
He is grateful to the first author for his kind hospitality. The authors thank A.Naor and D.Zaev for useful bibliographical informations.

\section{Preliminaries}

We shall consider metric spaces $(X,d)$ or even pointed metric spaces $(X,d,\bar x)$, when it will be needed to normalize
the value of Lipschitz functions at a distinguished point $\bar x$.
The {\em doubling constant} of a metric space $(X, d)$, denoted by $\lambda(X)$, 
is the infimum over all natural numbers $\lambda$ such that every ball in $X$ can be covered by $\lambda$ balls of half the radius. 
When $\lambda(X)<\infty$ one says that $(X, d)$ is a {\em doubling metric space}. 

Let $(Z, \|\cdot\|_Z)$ be a Banach space, for a function $f: X \rightarrow Z$ the Lipschitz constant is defined by
\begin{align*}
\|f\|_{Lip} = \sup\{\frac{\|f(x)- f(y)\|_Z}{d(x, y)}: \ x, \,y \in X, \ x \not= y\}.
\end{align*}
$f$ is said to be Lipschitz if $\|f\|_{Lip} < \infty$. For every $M \subseteq X$ closed we denote by $e(M, X, Z)$ the infimum over all constants $K$ such that every Lipschitz function $f: M \rightarrow Z$ can be extended to a Lipschitz function $\widetilde{f}: X \rightarrow Z$ satisfying $\|\widetilde{f}\|_{Lip} \leq K   \|f\|_{Lip}$. We also define 
$$e(X, Z)= \sup\{ e(M, X, Z): \ M \subseteq X\ \hbox{closed}\}$$
and
$$e(X)= \sup\{ e(M, X, Z): \ M\subseteq X\ \hbox{closed}, \ Z \ \hbox{Banach space}\}.$$
It is worth to mention the following remarkable result.

\begin{theorem}[Johnson, Lindenstrauss,  Schechtman \cite{JLS}]
There exists a constant $C>0$ such that for every $n$-dimensional  normed space $X$, $e(X) \leq C n$.
\end{theorem}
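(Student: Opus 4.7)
This is the theorem of Johnson, Lindenstrauss and Schechtman, and I will sketch the standard partition-of-unity route rather than the original argument. The plan is to build, for each closed $M \subseteq X$ and each $L$-Lipschitz $f : M \to Z$, an explicit linear extension $\widetilde f : X \to Z$ whose Lipschitz constant is $O(n) L$, by combining a Whitney-type decomposition of $X \setminus M$ with a careful choice of associated nearest-point data. The target being an arbitrary Banach space $Z$ rules out McShane-Whitney and Kirszbraun, so the construction has to be an explicit averaged one.

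I would first fix a Whitney-type partition of unity on $X \setminus M$: for each scale $k \in \mathbb{Z}$ let $A_k = \{x : 2^k \leq \dist(x, M) < 2^{k+1}\}$, choose a maximal $2^{k-4}$-separated net $\{x_{k, j}\}_j$ in $A_k$, and take Lipschitz bumps $\phi_{k, j}$ of Lipschitz constant $O(2^{-k})$ supported in $B(x_{k, j}, 2^{k-2})$, normalised so that $\sum_{k, j} \phi_{k, j} \equiv 1$ on $X \setminus M$. For each index select $m_{k, j} \in M$ with $d(x_{k, j}, m_{k, j}) \leq 2^{k+2}$. The candidate extension is
\[
\widetilde f(x) := \sum_{k, j} \phi_{k, j}(x)\, f(m_{k, j}) \quad (x \notin M), \qquad \widetilde f(x) := f(x) \quad (x \in M).
\]
The Lipschitz estimate splits into cases by the relative positions of $x,y$ with respect to $M$; the nontrivial one has $x, y \in X \setminus M$ with $d(x, y)$ small compared with $\dist(x, M)$. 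Writing $\widetilde f(x) - \widetilde f(y) = \sum_{k, j} (\phi_{k, j}(x) - \phi_{k, j}(y)) (f(m_{k, j}) - f(m_\star))$ for a reference index $\star$ active at $x$, one bounds each summand by $C L\, d(x, y)$ and reduces the whole estimate to controlling the number of indices $(k, j)$ active simultaneously near $x$.

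The main obstacle is precisely this overlap estimate, and it is where the factor $n$ enters. The crude bound via the doubling constant $\lambda(X)$ is $O(5^n)$ and thus exponential, which is useless. To bring it down to linear in $n$ one has to exploit the finite-dimensional linear structure of $X$ beyond mere doubling. One natural route is John's ellipsoid theorem, which provides a linear isomorphism $T : X \to \ell_2^n$ with $\|T\|\,\|T^{-1}\| \leq \sqrt n$ and therefore $e(X) \leq \sqrt n \cdot e(\ell_2^n)$, reducing the task to proving $e(\ell_2^n) = O(\sqrt n)$ by a refined Whitney construction in which Euclidean packing bounds the per-scale overlap by an absolute constant and an orthogonality/second-moment calculation across scales contributes the remaining $\sqrt n$. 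An alternative route, closer in spirit to the random projections studied in this paper, would sample the centers $\{x_{k, j}\}$ from a suitable log-concave measure adapted to $X$, so that the expected overlap at each scale grows linearly rather than exponentially in $n$. Either way, the technical heart is the dimension-counting step that converts the geometry of finite-dimensional convex bodies into the polynomial bound; this is genuinely $n$-dimensional and cannot come out of the abstract partition of unity machinery alone.
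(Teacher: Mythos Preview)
The paper does not prove this theorem; it is stated as background with a citation to \cite{JLS}, so there is no in-paper proof to compare against.

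Judged on its own merits: your reduction via John's ellipsoid, $e(X)\le\sqrt n\,e(\ell_2^n)$, is correct and is indeed the outer shell of the original argument. The gap is the Euclidean step $e(\ell_2^n)=O(\sqrt n)$. Your claim that in $\ell_2^n$ ``Euclidean packing bounds the per-scale overlap by an absolute constant'' is false for the Whitney cover you describe: points of a $2^{k-4}$-separated net lying in a ball of radius $2^{k-2}$ still number of order $9^n$, so the multiplicity of the cover is exponential in $n$ regardless of the choice of norm. The phrase ``an orthogonality/second-moment calculation across scales contributes the remaining $\sqrt n$'' does not point to any identifiable argument. The bound $e(\ell_2^n)=O(\sqrt n)$ is essentially the entire content of the theorem once John's ellipsoid is invoked, and it is \emph{not} obtained by sharpening Whitney combinatorics; it uses the Hilbert-space structure in a more essential way, which your sketch does not supply. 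The alternative you float via log-concave sampling is purely speculative and not a known proof.

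In short, you have the right outer shell but have deferred the one step that carries all the difficulty.
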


In the case when $X$ consists of $n$ points, the $\log n$ upper bound on $e(X)$ from
\cite{JLS} has been improved by Lee and Naor to $\log n/\log(\log n)$, see \cite{Naor} and the references therein for a discussion
about the best lower bounds to date. In this connection, see also \cite{Brudnyi} for the proof of the equivalence of $e(X)$ with the ``linear'' best
extension constant when finite dimensional Banach space targets are considered.

More recently, Lee and Naor have given another surprising result relative to $e(X)$, when doubling metric spaces are considered.

\begin{theorem}[Lee, Naor \cite{LN}]\label{LN1}
There exists a universal constant $C>0$ such that,  for any doubling metric space $(X, d)$,
$$e(X) \leq C \log \lambda(X).$$
\end{theorem}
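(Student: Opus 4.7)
The plan is, for each closed $M \subseteq X$, to produce a $C\log\lambda(X)$-gentle partition of unity of $X \setminus M$ subordinate to $M$ (in the sense introduced by Lee and Naor in \cite{LN}), and then apply the associated linear extension formula
\[
\widetilde f(x) = \sum_{y \in M} \varphi_y(x) f(y), \qquad \widetilde f|_M = f,
\]
which is $K$-Lipschitz uniformly over Banach-valued targets $Z$ whenever the partition is $K$-gentle. Taking the supremum over closed $M \subseteq X$ and over Banach spaces $Z$ then gives $e(X) \leq C \log \lambda(X)$.

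The construction proceeds hierarchically across the dyadic scales $2^k$, $k \in \mathbb{Z}$. At each scale $k$ I would produce a random partition $\Pi_k$ of $X$ whose clusters have diameter at most $2^k$ and which satisfies the padding estimate
\[
\Pr\bigl[B(x,r) \not\subseteq \Pi_k(x)\bigr] \leq C \log\lambda(X) \cdot \frac{r}{2^k} \qquad \text{for all } x \in X,\ 0 < r < 2^k.
\]
Such padded stochastic decompositions arise from a ball-growing scheme (of Calinescu--Karloff--Rabani / Gupta--Krauthgamer--Lee type) in which the centers of a maximal $2^{k-1}$-separated net are processed in a uniformly random order, each claiming the still-unassigned points within a ball of random radius drawn from a truncated exponential law on $[2^{k-2}, 2^{k-1}]$. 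The doubling hypothesis enters in the union bound: any ball of radius $2^k$ contains at most $\lambda(X)^{O(1)}$ net points, so the expected number of ``competitors'' that could cut the small ball $B(x,r)$ is $O(\log \lambda(X))$, and the random radius converts this into exactly the logarithmic factor above.

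Given these decompositions, for each $x \in X \setminus M$ I would pick $k(x)$ with $2^{k(x)} \asymp d(x,M)$, choose measurably a representative $\pi_k(C) \in C \cap M$ for every cluster $C \in \Pi_k$ meeting $M$, and define $\varphi_y(x)$ as the probability, averaged over the realizations of $\Pi_k$ for $k$ in an $O(1)$-wide window around $k(x)$, that $\pi_k(\Pi_k(x)) = y$. The resulting weights are deterministic, supported in $\{y \in M : d(x,y) \leq C d(x,M)\}$, sum to $1$, and satisfy $\varphi_y(x) \to \mathbf{1}_{y=x}$ as $x \to M$. Gentleness, namely $\sum_y |\varphi_y(x) - \varphi_y(x')| \, d(y, \{x,x'\}) \leq K\, d(x,x')$ for nearby $x,x'$, is deduced by $W_1$-duality from a coupling that succeeds on any realization in which none of the $\Pi_k$ cuts the pair $\{x,x'\}$ at the $O(1)$ active scales; the padding estimate then contributes $O(\log \lambda(X)) \cdot r$ per scale, and summing yields the claimed $K r$.

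The principal obstacle is the first step: obtaining the padded decomposition with a $\log \lambda(X)$ dependence rather than some power of $\lambda(X)$. The logarithm, as opposed to a polynomial bound, is the delicate point and requires the random-radius analysis to be coupled with an exponential change of variables in the cutting probability against the doubling-many competitor centers at each scale. Once that stochastic ingredient is in hand, the remainder---assembly of the partition of unity, the $W_1$ coupling estimate, and the Lee--Naor extension formula---is now-standard bookkeeping.
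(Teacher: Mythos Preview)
The paper does not prove this theorem. Theorem~\ref{LN1} is quoted from \cite{LN} as a known result; the paper only recalls the mechanism (Definition of $K$-gentle partition of unity and Theorem~\ref{LN3}, which shows that a $K$-gentle partition yields a linear extension with constant $K$) and then remarks, immediately after \eqref{LN5}, that ``the hard part of the proof of Theorem~\ref{LN1} consists in the proof of the existence, for any $M\subseteq X$ closed, of a $C\cdot\log(\lambda(X))$-gentle partition of unity w.r.t.\ $M$.'' That construction is not reproduced anywhere in the paper.

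So there is nothing to compare against: your proposal is a sketch of the original Lee--Naor argument, not of anything the present paper does. As a sketch of \cite{LN} it is accurate in outline --- padded random decompositions at dyadic scales, assembly into a partition of unity subordinate to $M$, and then the extension formula of Theorem~\ref{LN3}. Two small remarks. First, you collapse the randomness into deterministic weights $\varphi_y(x)$ by averaging, whereas the paper's definition of gentle partition keeps the probability space $(\Omega,\Sigma,\mathbb{P})$ explicit; this is harmless (averaging can only help in \eqref{LN2} by convexity of $|\cdot|$), but it is worth saying explicitly. Second, your description of the padding estimate and the truncated-exponential radius is a correct summary of the mechanism, but the passage from ``at most $\lambda(X)^{O(1)}$ competitors'' to a clean $\log\lambda(X)$ cutting probability is exactly the delicate computation in \cite{LN}; what you have written is a plan, not a proof, and that step would still need to be carried out in full.
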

The previous theorem has been proved using the notion of {\em gentle partition of unity}, whose definition
is recalled below.

\begin{definition}[$K$-gentle partition of unity]
Let $(X, d)$ be a metric space, $M \subseteq X$ a closed subspace and 
$(\Omega, \Sigma, \mathbb{P})$ a measure space. Given $K>0$, we shall say that a function 
$$\varPsi:\Omega \times X \longrightarrow [0, +\infty[$$ 
is a {\em $K$-gentle partition of unity w.r.t. $M$} if the following conditions hold:
\begin{enumerate}
\item[($i$)] for all $x \in M$,  $\varPsi(\cdot, x) \equiv 0$;
\item[($ii$)] for all $x \in X\setminus M$, the $\Sigma$-measurable function $\varPsi(\cdot, x)$ satisfies
$$\int_\Omega\varPsi(\omega,x)\ d\mathbb{P}(\omega) = 1;$$
\item[($iii$)] there exists a $(\Sigma-\mathcal{B}(M))$-measurable function $\gamma: \Omega \longrightarrow M$ such that 
\begin{equation}\label{LN2}
\int_\Omega d(\gamma(\omega), x) \cdot | \varPsi(\omega, x) -  \varPsi(\omega, y)| \ d\mathbb{P}(\omega) \leq K d(x, y)
\qquad\forall x,\,y \in X. 
\end{equation}
\end{enumerate}
\end{definition}

Gentle partitions of unity naturally induce linear and continuous extension operators, which are also monotone in this
sense: if $\langle z^*,f\rangle$ is nonnegative on $M$ for some $z^*\in Z^*$, then the same is true for the extended
map. We reproduce here \cite[Lemma 2.1]{LN}.

\begin{theorem}\label{LN3}
Let $K \geq 1$,  $(X, d)$ be a metric space, $M \subseteq X$ a closed subspace and $Z$ be a Banach space.  
Assume that $\varPsi:\Omega \times X \longrightarrow [0, +\infty[$
is a $K$-gentle partition of unity w.r.t. $M$. Then the extension map (understanding the integral in Pettis' sense)
\begin{align}\label{LN4}
E(f)(x) =
\begin{cases}
f(x) ,\ \hbox{if $x \in M$,}\\
\int_{\Omega} f(\gamma(\omega)) \varPsi(\omega, x) \ d\mathbb{P}(\omega),\ \hbox{if $x \in X \setminus M$}
\end{cases} 
\end{align}
defines a bounded, monotone and linear operator from the space of Lipschitz function from $M$ to $Z$,  to the space of Lipschitz functions  from $X$ to $Z$ with norm less or equal to $K$; namely for every Lipschitz function $f: M \rightarrow Z$, one has $\|E(f)\|_{Lip} \leq K \|f\|_{Lip}$.
\end{theorem}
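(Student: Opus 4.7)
The plan is to verify linearity and monotonicity directly from the integral formula (\ref{LN4}) and then tackle the Lipschitz estimate by first reducing to a scalar target via Hahn-Banach and then splitting into three geometric cases. Linearity is transparent from (\ref{LN4}). For monotonicity, if $z^*\in Z^*$ satisfies $\langle z^*, f(m)\rangle\ge 0$ for every $m\in M$, the scalar integrand $\langle z^*, f(\gamma(\omega))\rangle\,\varPsi(\omega, x)$ is nonnegative because $\varPsi\ge 0$ and $\gamma(\omega)\in M$, hence $\langle z^*, E(f)(x)\rangle\ge 0$ for every $x\in X$.

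For the Lipschitz bound, the key first move is the scalar reduction. By the defining property of the Pettis integral, $\langle z^*, E(f)(\cdot)\rangle = E(z^*\circ f)(\cdot)$ for every $z^*\in Z^*$, and $z^*\circ f:M\to\mathbb{R}$ is $\|z^*\|\,\|f\|_{Lip}$-Lipschitz. Hahn-Banach duality then reduces the task to proving $|E(h)(x)-E(h)(y)|\le K\|h\|_{Lip}\,d(x,y)$ for every scalar Lipschitz $h:M\to\mathbb{R}$. I would split into three cases. When $x,y\in M$ this is just the Lipschitz property of $h$, and $K\ge 1$ suffices. When $x\in M$ and $y\in X\setminus M$, the vanishing $\varPsi(\omega,x)\equiv 0$ gives $\varPsi(\omega,y)=|\varPsi(\omega,x)-\varPsi(\omega,y)|$, so after using $\int_\Omega\varPsi(\omega,y)\,d\mathbb{P}=1$ one can write
$$E(h)(x)-E(h)(y)=\int_\Omega\bigl[h(x)-h(\gamma(\omega))\bigr]\,|\varPsi(\omega,x)-\varPsi(\omega,y)|\,d\mathbb{P}(\omega),$$
and the Lipschitz bound on $h$ combined with (\ref{LN2}) closes this case.

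The delicate case, and the main obstacle, is $x,y\in X\setminus M$: here neither $\varPsi(\omega,x)$ nor $\varPsi(\omega,y)$ vanishes and $h$ itself is not defined at $x$ or $y$. The plan is to extend $h$ to an $\|h\|_{Lip}$-Lipschitz function $\tilde h$ on the whole of $X$ via McShane's formula (\ref{McShane}), an option opened up precisely by the scalar reduction. The identity $\int_\Omega[\varPsi(\omega,x)-\varPsi(\omega,y)]\,d\mathbb{P}=1-1=0$ then lets me subtract the constant $\tilde h(x)$ to rewrite
$$E(h)(x)-E(h)(y)=\int_\Omega\bigl[h(\gamma(\omega))-\tilde h(x)\bigr]\bigl[\varPsi(\omega,x)-\varPsi(\omega,y)\bigr]\,d\mathbb{P}(\omega).$$
Since $\gamma(\omega)\in M$ we have $h(\gamma(\omega))=\tilde h(\gamma(\omega))$, so $|h(\gamma(\omega))-\tilde h(x)|\le\|h\|_{Lip}\,d(\gamma(\omega),x)$; inserting this estimate and invoking (\ref{LN2}) once more delivers the required bound $K\|h\|_{Lip}\,d(x,y)$ and completes the proof.
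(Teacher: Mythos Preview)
Your argument is correct and rests on the same device as the paper's: rewrite $E(f)(x)-E(f)(y)$ as $\int_\Omega[f(\gamma(\omega))-c]\,[\varPsi(\omega,x)-\varPsi(\omega,y)]\,d\mathbb{P}$ for a suitable constant $c$, bound $|f(\gamma(\omega))-c|$ by $\|f\|_{Lip}\,d(\gamma(\omega),\cdot)$, and invoke \eqref{LN2}. The one methodological difference is in how you handle the case $x,y\in X\setminus M$. The paper works directly with the $Z$-valued map and subtracts $f(y)$, writing
\[
E(f)(x)-E(f)(y)=\int_\Omega[f(\gamma(\omega))-f(y)]\,[\varPsi(\omega,x)-\varPsi(\omega,y)]\,d\mathbb{P}(\omega),
\]
which as stated presumes $y\in M$ (or an implicit extension of $f$). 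You instead first reduce to scalar targets via the Pettis identity $\langle z^*,E(f)\rangle=E(z^*\!\circ f)$ and Hahn--Banach, and this reduction is what lets you call on McShane's formula \eqref{McShane} to produce a legitimate constant $\tilde h(x)$ with $|h(\gamma(\omega))-\tilde h(x)|\le\|h\|_{Lip}\,d(\gamma(\omega),x)$. Your detour through the scalar case buys a fully rigorous choice of constant in the delicate case; the paper's version is a line shorter but leaves that point implicit.
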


Let us notice that the proof of the previous theorem relies on the following easy observation: since $\varPsi(\cdot,z)$ is a probability density 
for all $z\in X\setminus M$, for all $x\in X\setminus M$ one has the identity
$$
E(f)(x) - E(f)(y) = \int_\Omega [f(\gamma(\omega)) - f(y)]\cdot [ \varPsi(\omega, x) -  \varPsi(\omega, y)] \ d\mathbb{P}(\omega) 
$$
both when $y\in X\setminus M$ and $y\in M$. Hence, in both cases we can estimate
\begin{equation}\label{LN5}
\begin{split}
\|E(f)(x) - E(f)(y)\| &\leq \|f\|_{Lip}\int_\Omega d(\gamma(\omega),y)|\varPsi(\omega, x) -  \varPsi(\omega, y)| \ d\mathbb{P}(\omega)\\ 
& \leq K \|f\|_{Lip}d(x,y),
\end{split}
\end{equation}
thus getting the desired Lipschitz estimate when at least one of the two points does not belong to $M$
(the case when $x,\,y\in M$ is obvious). On the other hand,
the hard part of the proof of Theorem~\ref{LN1} consists in the proof of the existence, for any $M\subseteq X$ closed, of a
$C \cdot \log(\lambda(X))$-gentle partition of unity w.r.t. to $M$, with $C>0$ universal.

For any pointed metric space $(X,d,\bar x)$ we denote by Lip$_0(X)$ (omitting for notational simplicity the dependence on $d$ and $\bar x$)
the Banach  space of all real-valued Lipschitz functions on $X$ which vanish at $\bar x$, equipped with the natural norm
$$\|f\|_{Lip_0(X)} = \sup \{\frac{|f(x) - f(y)|}{d(x, y)}: \ x \not = y \ \hbox{in} \ X\}.$$ 
An analogous definition can be given for $Z$-valued maps, with $Z$ Banach space.

For all $x \in X$, the Dirac measure $\delta_x$ defines a continuous linear functional on Lip$_0(X)$, defined by $\langle f, \delta_x\rangle = f(x)$,
with $\|\delta_x\|\leq d(x,\bar x)$. Equiboundedness tells us that the closed unit ball of Lip$_0(X)$ is compact for the topology of pointwise convergence on $X$, and therefore the closure of span$\{ \delta_x: \ x \in X\}$ in Lip$_0(X)^*$ is a canonical predual of Lip$_0(X)$, usually denoted by $\mathcal{F}(X)$.  Let us notice that the weak$^*$ topology on $\hbox{Lip}_0(X)$, induced by $\mathcal{F}(X)$ and the topology of pointwise convergence 
induce the same topology on bounded subsets of $\hbox{Lip}_0(X)$.

The  space $\mathcal{F}(X)$ is defined in \cite[Chapter 2]{We} and it is called {\em Arens-Eells space}. However, recently many authors who are studying the geometry of $\mathcal{F}(X)$ call this {\em Free space} associated to $X$. It is sometimes convenient to think of $\mathcal{F}(X)$ as the completion of the set of Borel measures $\mu$ on $X$ with finite support under the norm
$$\|\mu\|_{\mathcal{F}(X)} = \sup_{\|f\|_{Lip_0(X)} \leq 1} \int_X f \ d\mu.$$

We shall use the spaces Lip$_0(X,Z)$ in connection with the existence of extension operators for Lipschitz maps; notice that
if $(X,d,\bar x)$ is a pointed metric space and $M\subseteq X$ is a subspace with $\bar x\in M$, then any extension operator
$E:\hbox{Lip}(M, Z) \rightarrow \hbox{Lip}(X, Z) $ induces by restriction an operator $\tilde{E}:\hbox{Lip}_0(M, Z) \rightarrow \hbox{Lip}_0(X, Z)$;
conversely, any such operator $\tilde E$ can be lifted to an operator $E$ setting
$$
Ef(x):=\tilde{E} (f-f(\bar x))+f(\bar x).
$$
These simple transformations preserve continuity, linearity and monotonicity properties of the operators.

Finally, we stress that the Dirac measure map
$$ \delta: X \longrightarrow \mathcal{F}(X)$$
$$x \longmapsto \delta_x$$
is an isometry.

\subsection{$K$-random projections on closed subsets}

Let $(X, d)$ be a metric space, let $\mathcal{B}(X)$ be its Borel $\sigma$-algebra and let us denote by 
$\mathcal{M}(X)$ the space of all $\sigma$-additive Borel measures with finite total variation, by
$\mathcal{M}_+(X)$ the positive cone. In addition we denote
by $\mathcal{P}_1(X)$ the affine subspace of all Borel probability measures on $X$ with finite
first moment.

For $\mu\in\mathcal{M}(X)$ the total variation measure $|\mu|\in\mathcal{M}_+(X)$ is defined by
\begin{equation}\label{eq:totvar1}
|\mu|(B)=\sup\left\{\sum_i|\mu(B_i)|:\ \text{$B_i$ Borel disjoint partition of $B$}\right\}
\end{equation}
or, equivalently (thanks to the Hahn decomposition) by
\begin{equation}\label{eq:totvar2}
|\mu|(B)=\sup\left\{\mu(C)-\mu(B\setminus C):\ \text{$C\in\mathcal{B}(X)$, $C\subseteq B$}\right\}.
\end{equation}
In the particular case when $\mu(X)=0$ we obtain also
\begin{equation}\label{eq:totvar3}
\frac 12|\mu|(X)=\sup\left\{\mu(C):\ \text{$C\in\mathcal{B}(X)$}\right\}.
\end{equation}

In the following proposition we summarize known relations between $\mathcal{F}(X)$ and $\mathcal{M}(X)$.
We denote by $\mathcal{F}_+(X)$ the class of nondecreasing functionals in $\mathcal{F}(X)$.

\begin{proposition} \label{prop:link} Let $(X,d,\bar x)$ be a pointed metric space.
Any $\mu\in\mathcal{M}(X)$ with $\int_X d(\cdot,\bar x)\ d|\mu|<+\infty$
induces $\upsilon\in\mathcal{F}(X)$, by integration:
\begin{equation}\label{eq:july2}
\upsilon(g):=\int_X g\ d\mu\qquad\forall g\in\hbox{Lip}_0(X)
\end{equation}
and 
\begin{equation}\label{eq:july22}
\|\upsilon\|_{\mathcal{F}(X)}\leq \int_Xd(\cdot,\bar x)\ d|\mu|.
\end{equation}
The converse holds if $(X,d)$ is complete and $\upsilon\in\mathcal{F}_+(X)$, in this case $\upsilon$ is induced by 
$\mu\in\mathcal{P}_1(X)$.
\end{proposition}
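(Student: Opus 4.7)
\emph{Plan.} For the first claim, the bound $|g(x)|\le\|g\|_{\mathrm{Lip}_0(X)}\,d(x,\bar x)$ valid for every $g\in\mathrm{Lip}_0(X)$ shows that the integral in \eqref{eq:july2} is absolutely convergent and defines a bounded linear functional on $\mathrm{Lip}_0(X)$ satisfying \eqref{eq:july22}. To see that $\upsilon$ in fact lies in the predual $\mathcal{F}(X)$, I would approximate $\mu$ in $\|\cdot\|_{\mathcal{F}(X)}$ by finitely supported measures: via the Hahn decomposition reduce to $\mu\ge 0$, fix $\epsilon>0$, choose $R>0$ with $\int_{\{d(\cdot,\bar x)>R\}}d(\cdot,\bar x)\,d\mu<\epsilon$, partition the part of $B_R(\bar x)$ where $\mu$ concentrates into Borel pieces $A_1,\dots,A_N$ of diameter at most $\epsilon/(1+\mu(X))$, pick $x_i\in A_i$, and set $\mu_\epsilon:=\sum_i\mu(A_i)\delta_{x_i}$. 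The identity
$$
\int g\,d\mu-\int g\,d\mu_\epsilon=\sum_i\int_{A_i}(g(\cdot)-g(x_i))\,d\mu+\int_{\{d(\cdot,\bar x)>R\}}g\,d\mu,
$$
combined with $|g(\cdot)-g(x_i)|\le\|g\|_{\mathrm{Lip}_0}\,\mathrm{diam}(A_i)$ and $|g|\le\|g\|_{\mathrm{Lip}_0}\,d(\cdot,\bar x)$, then yields $\|\upsilon-\upsilon_{\mu_\epsilon}\|_{\mathcal{F}(X)}\le 2\epsilon$.

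For the converse with $(X,d)$ complete and $\upsilon\in\mathcal{F}_+(X)$, my approach is Hahn--Banach separation: I would identify $\mathcal{F}_+(X)$ with the $\mathcal{F}(X)$-norm closure $C$ of the convex hull of $\{\delta_x:x\in X\}$. If $\upsilon\notin C$, a separating $g\in\mathrm{Lip}_0(X)$ would give $g(x)=\langle g,\delta_x\rangle\le\alpha$ for every $x\in X$ and $\upsilon(g)>\alpha\ge\sup_X g$, contradicting the monotonicity inherent in $\upsilon\in\mathcal{F}_+(X)$. Consequently $\upsilon$ is the $\mathcal{F}(X)$-limit of finitely supported probability measures $\mu_n=\sum_i c_{n,i}\delta_{x_{n,i}}$ with $c_{n,i}\ge 0$ and $\sum_i c_{n,i}=1$, and testing against $d(\cdot,\bar x)\in\mathrm{Lip}_0(X)$ yields the uniform first-moment bound $\sup_n\int d(\cdot,\bar x)\,d\mu_n<\infty$.

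The supports of all $\mu_n$ together with $\bar x$ lie in a separable closed---hence Polish, by completeness of $(X,d)$---subspace $X'\subseteq X$. By Kantorovich--Rubinstein duality, on probability measures of equal mass the $\mathcal{F}(X)$-norm coincides with $W_1$, so $(\mu_n)$ is Cauchy in the complete metric space $(\mathcal{P}_1(X'),W_1)$; its limit $\mu\in\mathcal{P}_1(X')\subseteq\mathcal{P}_1(X)$ induces $\upsilon$, because $W_1$-convergence is strong enough to pass to the limit in $\int g\,d\mu_n\to\int g\,d\mu$ for every Lipschitz $g$. The main technical obstacle is the Hahn--Banach step: it requires the strong form of the hypothesis $\upsilon\in\mathcal{F}_+(X)$, namely the sub-supremum inequality $\upsilon(g)\le\sup_X g$ for every $g\in\mathrm{Lip}_0(X)$, which rules out functionals like $\delta_{x_1}+\delta_{x_2}$ that are positive on nonnegative Lipschitz functions but whose ``mass'' exceeds $1$ and so admit no probability representation.
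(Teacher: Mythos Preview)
Your approach differs from the paper's in both halves.

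\textbf{First part.} The paper does not approximate $\mu$ at all: it observes that $g\mapsto\int g\,d\mu$ is continuous for pointwise convergence on bounded sets of $\mathrm{Lip}_0(X)$ (by dominated convergence, since $|g|\le \|g\|_{Lip}\,d(\cdot,\bar x)\in L^1(|\mu|)$), and then invokes the Krein--Smulian theorem to upgrade this to weak$^*$ continuity, hence membership in $\mathcal F(X)$. Your explicit approximation by finitely supported measures is more hands-on, but the step ``partition the part of $B_R(\bar x)$ where $\mu$ concentrates into finitely many Borel pieces of small diameter'' tacitly uses a tightness/separability hypothesis that the statement does not assume: in a general metric space a finite Borel measure need not be concentrated on a totally bounded (or even separable) set, so the finite partition is not automatic. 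The Krein--Smulian route sidesteps this entirely.

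\textbf{Second part.} The paper proceeds quite differently: it quotes a representation theorem from \cite{Hil} for positive continuous functionals on $\mathrm{Lip}(X)$ (with the norm $\|f\|_{Lip}+|f(\bar x)|$), then reduces the $\mathrm{Lip}_0$ setting to that one via the cutoff $\chi=\min\{d(\cdot,\bar x),1\}$, defining $L(f):=\upsilon(\chi f)$, representing $L$ by a finite positive $\nu$, and finally recovering $\mu=\chi^{-1}\nu$ by approximating $g$ with $g\chi/\max\{\chi,\varepsilon\}$. Your Hahn--Banach/$W_1$-completeness strategy is a genuinely different idea, and you have put your finger on exactly its weak point: the separation argument needs $\upsilon(g)\le\sup_X g$, and this does \emph{not} follow from ``$\upsilon$ nondecreasing'' because constants do not belong to $\mathrm{Lip}_0(X)$. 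Your own example $\delta_{x_1}+\delta_{x_2}$ (or simply $2\delta_x$ for $x\neq\bar x$) lies in $\mathcal F_+(X)$ but outside the closed convex hull of the Dirac masses, so that hull is strictly smaller than $\mathcal F_+(X)$. The paper's $\chi$-trick produces a positive representing measure without ever needing the sub-supremum inequality; your argument, as it stands, proves the representation only on the normalized cone $\{\upsilon\in\mathcal F_+(X):\upsilon(g)\le\sup_X g\text{ for all }g\}$, which is precisely that closed convex hull. (Incidentally, your observation also shows that the conclusion ``$\mu\in\mathcal P_1(X)$'' in the proposition should really read ``positive measure with finite first moment'' absent a normalization hypothesis.)
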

\begin{proof}
Clearly, by dominated convergence theorem, the functional in \eqref{eq:july2} is continuous w.r.t. pointwise convergence
on bounded subsets of $\hbox{Lip}_0(X)$. By the Krein-Smulian theorem, it is weak$^*$ continuous, therefore 
$\upsilon\in\mathcal{F}(X)$. Since $|g(x)|\leq d(x,\bar x)$ for all $g\in\hbox{Lip}_0(X)$ with $\|g\|_{Lip}\leq 1$, we
obtain also \eqref{eq:july22}.

The converse statement is proved in \cite{Hil} (see Theorem~3.14 and Theorem~4.3 therein)
for positive functionals in the space $\hbox{Lip}(X)$ of Lipschitz
functions, endowed with the norm $\|f\|_e=\|f\|_{Lip}+|f(\bar x)|$. Let's see how the same property can be achieved 
for nondecreasing functionals in $\mathcal{F}(X)$. Let $\chi(x)=\min\{d(x,\bar x),1\}\in\hbox{Lip}_0(X)$ and let us define
$$
L(f):=\upsilon(\chi f)\qquad f\in\hbox{Lip}(X).
$$
Since $\|f\|_{Lip}\leq\|f\|_{Lip}+\sup_{B_2(\bar x)}|f|\leq 3\|f\|_{Lip}+|f(\bar x)|$,  we can represent
$$
L(f)=\int_X f\ d\mu\qquad \forall f\in\hbox{Lip}(X)
$$
for some $\nu\in\mathcal{M}_+(X)$ with $\mu(X)=\upsilon(\chi)$. Now fix $g\in\hbox{Lip}_0(X)$ bounded nonnegative and,
for $\varepsilon>0$, define
$g_\varepsilon=g\chi/\max\{\chi,\varepsilon\}$, i.e.
$$
g_\varepsilon(x):=
\begin{cases}
g(x) &\text{if $d(x,\bar x)\geq \varepsilon$;}\\ \\
\displaystyle{\frac{g(x)\chi(x)}{\varepsilon}}&\text{if $d(x,\bar x)<\varepsilon$.}
\end{cases}
$$
It is easy to see that $g_\varepsilon\to g$ pointwise as $\varepsilon\to 0^+$ with $\sup_{\varepsilon\in (0,1)}\|g_\varepsilon\|_{Lip}<+\infty$,
hence we can pass to the limit as $\varepsilon\to 0^+$ in
$$
\upsilon(g_\varepsilon)=L(\frac{g}{\chi_\varepsilon})=\int_X \frac{g}{\chi_\varepsilon}\ d\nu
$$
to get \eqref{eq:july2} with $\mu=\chi^{-1}\nu$ in the class of bounded nonnegative $\hbox{Lip}_0(X)$ functions. A simple
approximation then extends the validity of \eqref{eq:july2} to the whole of $\hbox{Lip}_0(X)$.
\end{proof}

Let us recall the {\em Wasserstein distances} (or {\em Kantorovich-Rubinstein distance}) between measures with equal mass. \smallskip

For any measures $\mu,\, \eta \in \mathcal{M}_+(X)$ with $\mu(X)=\eta(X)$,
the Wasserstein distance of order $1$ between $\mu$ and  $\eta$ is defined by 
$$W_1(\mu, \eta) = \inf_{\pi \in \Pi(\mu, \eta)} \int_{X\times X} d(x, y) d\pi(x, y),$$
where $\Pi(\mu, \eta)$ consists of all $\pi\in\mathcal{M}_+(X \times X)$ such that 
$$\pi(A \times X) = \mu(A), \quad \pi(X \times B) = \eta(B), \qquad\ \forall A,\, B\in \mathcal{B}(X).$$
The {\em duality formula}, valid if either $(X,d)$ is Polish or if suitable tightness assumptions are made on $\mu$ and $\eta$, 
is of fundamental importance in many applications:
\begin{equation}\label{eq:defW-1}
W_1(\mu, \eta) = \sup\left\{\int_X g \ d\mu - \int_X g \ d\eta:\ \text{$g\in\hbox{Lip}(X)$, $\|g\|_{Lip}\leq 1$}\right\}.
\end{equation}
Notice that \eqref{eq:defW-1} can also be written for pointed metric spaces $(X,d,\bar x)$ in the form
$$
W_1(\mu, \eta) = \sup\left\{\langle g,\mu-\nu\rangle:\ \text{$g\in\hbox{Lip}_0(X)$, $\|g\|_{Lip_0}\leq 1$}\right\}.
$$
Indeed, in the sequel we shall also consider the case when $\mu$ and $\nu$ belong to the more general
class $\mathcal{F}(X)$, understanding \eqref{eq:defW-1} as a definition, so that $W_1(\mu,\eta)=\|\mu-\eta\|_{\mathcal{F}(X)}$.
In this case it could be that an ``infimum'' representation can still be recovered,
see Remark~\ref{rem:infimum} below  and the partial results in this
direction discussed in \cite{We}. However, this duality will not play a role in our paper.

\begin{remark}\label{rem:infimum}
In a pointed metric space $(X,d,\bar x)$,
it would be interesting to investigate whether the duality formula persists for general $\mu, \,\nu \in \mathcal{F}(X)$, namely
(here we consider $(\bar x,\bar x)$ as the basepoint of $X\times X$)
$$
W_1(\mu,\nu)=\inf\left\{L(d):\ L\in \mathcal{F}(X\times X),\,\,L(g(x)+h(y))=\mu(g)+\nu(h)\right\}.
$$
Notice that, a standard Hahn-Banach procedure seems to require that the marginals are non-negative, a case already
covered by Proposition~\ref{prop:link}.
\end{remark}
 
The concept of strong $K$-random projection introduced below corresponds, to $K$-Lipschitz retracts introduced 
in Definition~3.1 in \cite{Ohta}, when $p=1$.
 
\begin{definition}[$K$-random projections]\label{A1}
Let $(X, d)$ be a metric space, $M \subseteq X$ a closed subspace and $K\geq1$. We shall say that $X$ admits a 
{\em $K$-random projection on $M$} if there a family $\{\upsilon_x:\ x \in X\} \subseteq \mathcal{F}(M)$ such that
\begin{enumerate}
\item[($i$)] For all $\upsilon_x= \delta_x$,  for all $x \in M$;
\item[($ii$)]  for every $x,\, y \in X$, it holds
\begin{equation}\label{A2}
W_1(\upsilon_x, \upsilon_y)\leq K d(x, y). 
\end{equation}
\end{enumerate}
In case $\{\upsilon_x:\ x \in X\} \subseteq \mathcal{P}_1(M)$, we say that $\upsilon_x$ is a strong $K$-random
projection on $M$.
\end{definition}

Of course \eqref{A2} can also be written as $\|\upsilon_x - \upsilon_y\|_{\mathcal{F}(M)}\leq K d(x, y)$. 
Notice also that, thanks to Proposition~\ref{prop:link}, strong $K$-random projections can also be defined by requiring
$\upsilon_x$ to be elements of $\mathcal{F}_+(M)$.

It is obvious that $K$-Lipschitz retraction maps $f:X\longrightarrow M$ correspond to strong $K$-random projections
(the deterministic ones), given by $\upsilon_x=\delta_{f(x)}$.
In the following proposition we provide a basic and simple link between $K$-gentle partitions of unity and strong
$K$-random projections. This was already observed in \cite{Ohta}, see Lemma~4.3 therein, but we provide
a slightly different proof. 
We state the result for Polish spaces, but this assumption is not really restrictive, because
the doubling property is stable under metric completion and complete doubling spaces are proper (i.e. bounded
closed sets are compact) and hence Polish.

\begin{theorem} [$K$-gentle partitions induce strong $K$-random projections]\label{thm:main}
If $(X,d)$ is Polish, any $K$-gentle partition of unity w.r.t. to $M$ induces a strong $K$-random projection on $M$.
In particular, any doubling and Polish metric space $(X, d)$ admits strong
$C\cdot \log(\lambda(X))$-random projections on closed subspaces, for some universal constant $C>0$. 
\end{theorem}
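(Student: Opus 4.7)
The plan is to use the $K$-gentle partition of unity $\varPsi$ together with its associated measurable selector $\gamma:\Omega\to M$ to define, for each $x\in X$, a probability measure $\upsilon_x$ on $M$. For $x\in X\setminus M$ I would take $\upsilon_x$ to be the push-forward of $\varPsi(\omega,x)\,d\mathbb{P}(\omega)$ under $\gamma$, namely
$$\upsilon_x(A):=\int_\Omega \mathbf{1}_A(\gamma(\omega))\,\varPsi(\omega,x)\,d\mathbb{P}(\omega)\qquad A\in\mathcal{B}(M),$$
while for $x\in M$ I would set $\upsilon_x:=\delta_x$, a convention consistent with $\varPsi(\cdot,x)\equiv 0$. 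Condition $(ii)$ of the gentle partition guarantees that $\upsilon_x$ is a Borel probability measure, and I would verify the finite first moment (hence $\upsilon_x\in\mathcal{P}_1(M)$) by fixing a basepoint $y_0\in M$ and applying $(iii)$ with the pair $(y_0,x)$: since $\varPsi(\cdot,y_0)\equiv 0$, this yields
$$\int_M d(z,y_0)\,d\upsilon_x(z)=\int_\Omega d(\gamma(\omega),y_0)\,\varPsi(\omega,x)\,d\mathbb{P}(\omega)\leq K\,d(y_0,x)<\infty.$$

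The heart of the proof is the Wasserstein bound \eqref{A2}. Since $M$ is Polish as a closed subset of the Polish space $X$, I would invoke the duality \eqref{eq:defW-1} on $M$ and estimate $\int_M g\,d\upsilon_x-\int_M g\,d\upsilon_y$ uniformly over $1$-Lipschitz $g:M\to\mathbb{R}$. My strategy is to extend $g$ to a $1$-Lipschitz $\tilde g:X\to\mathbb{R}$ via McShane's formula \eqref{McShane}, rewrite the two expectations as integrals over $\Omega$ against $\varPsi(\omega,x)$ and $\varPsi(\omega,y)$, and then replicate the subtraction trick already visible in \eqref{LN5}. When $x,y\in X\setminus M$, the difference $\varPsi(\omega,x)-\varPsi(\omega,y)$ has zero $\mathbb{P}$-integral by $(ii)$, so after subtracting the constant $\tilde g(x)$ I would obtain
$$\left|\int_\Omega[\tilde g(\gamma(\omega))-\tilde g(x)][\varPsi(\omega,x)-\varPsi(\omega,y)]\,d\mathbb{P}\right|\leq \int_\Omega d(\gamma(\omega),x)\,|\varPsi(\omega,x)-\varPsi(\omega,y)|\,d\mathbb{P}\leq K\,d(x,y)$$
directly by $(iii)$. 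If instead $x\in M$ and $y\in X\setminus M$, I would exploit $\varPsi(\cdot,x)\equiv 0$ together with $\int_\Omega\varPsi(\omega,y)\,d\mathbb{P}=1$ to recast $\tilde g(x)-\int_\Omega\tilde g(\gamma(\omega))\varPsi(\omega,y)\,d\mathbb{P}$ as $\int_\Omega[\tilde g(x)-\tilde g(\gamma(\omega))][\varPsi(\omega,y)-\varPsi(\omega,x)]\,d\mathbb{P}$, and the same bound follows; the symmetric case $x\notin M,\,y\in M$ is analogous, and $x,y\in M$ is trivial.

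The second assertion is an immediate corollary: the difficult half of Theorem~\ref{LN1}, due to Lee and Naor, produces for every closed $M\subseteq X$ in a doubling metric space a $C\log\lambda(X)$-gentle partition of unity with $C$ a universal constant, and the construction above converts it into a strong $C\log\lambda(X)$-random projection. The main obstacle I foresee is organizing the $W_1$ estimate without redundancy, because the quantity $\int_\Omega\varPsi(\omega,\cdot)\,d\mathbb{P}$ jumps from $0$ on $M$ to $1$ on $X\setminus M$, and so the ``right'' constant to subtract inside the duality formula depends on the location of $x$ and $y$, even though the underlying mechanism — extend $g$ by McShane, estimate $|\tilde g(\gamma(\omega))-\tilde g(x)|$ by $d(\gamma(\omega),x)$, invoke $(iii)$ — is the same in every case.
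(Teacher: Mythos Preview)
Your proposal is correct and follows essentially the same construction as the paper: push forward $\varPsi(\cdot,x)\,d\mathbb{P}$ through $\gamma$ to define $\upsilon_x$, check the first moment via \eqref{LN2} with a point of $M$, and then bound $W_1$ through the duality formula and the subtraction trick. Two small differences are worth noting. First, you make explicit the McShane extension $\tilde g$ of the test function, whereas the paper simply writes ``$g(x)$'' for $x\notin M$ as if $g$ were already defined there; your formulation is cleaner and explains why the inequality $|g(\gamma(\omega))-g(x)|\leq d(\gamma(\omega),x)$ is legitimate. Second, for the mixed case $x\in X\setminus M$, $y\in M$ the paper abandons duality and instead bounds $W_1$ from above by the cost of the product coupling $\upsilon_x\otimes\delta_y$, while you keep the duality argument uniform across cases; both routes are equally short and lead to the same inequality $\int_\Omega d(\gamma(\omega),y)|\varPsi(\omega,x)-\varPsi(\omega,y)|\,d\mathbb{P}\leq K\,d(x,y)$.
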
	

\begin{proof}
If   $\varPsi:\Omega \times X \longrightarrow [0, +\infty[$ and  $\gamma: \Omega \longrightarrow M$ form a $K$-gentle partition of unity of $X$ 
w.r.t. $M$, then one define the family of probability measures $\{\upsilon_x\}_{x\in X}$ in $M$ as
\begin{align}\label{strong}
\upsilon_x(A):= 
\begin{cases}
\delta_x(A), \ \hbox{if $x \in M$};\\ 
\int_{\gamma^{-1}(A)}  \varPsi(\omega, x)\ d\mathbb{P}(\omega),  \ \hbox{if $x \in X \setminus M$} .
\end{cases}
\end{align}
It is easy to check, using \eqref{LN2} with $y\in M$, that $\upsilon_x$ have finite first moment.
Let us fix $\bar x\in M$, so that $\hbox{Lip}_0(M)$ makes sense, and
let us use the duality formula for the Kantorovich-Rubinstein distance to estimate 
$W_1(\upsilon_x, \upsilon_y)$. Let $g \in \hbox{Lip}_0(M)$ with $\|g\|_{Lip} \leq 1$. If 
$x, \,y \in X \setminus M$ one gets
\begin{align*}
\int_M g \ d\upsilon_x - \int_M g \ d\upsilon_y &= \int_{\Omega} g(\gamma(\omega)) (\varPsi(\omega, x) - \varPsi(\omega, y))\  d\mathbb{P}(\omega)\\
&=  \int_{\Omega} (g(\gamma(\omega))-g(x)) (\varPsi(\omega, x) - \varPsi(\omega, y))\  d\mathbb{P}(\omega)\\
& \leq \int_{\Omega} d(\gamma(\omega), x) |\varPsi(\omega, x) - \varPsi(\omega, y)|\  d\mathbb{P}(\omega)\\
& \leq K d(x, y).
\end{align*}
Therefore, $W_1(\upsilon_x, \upsilon_y) \leq K d(x, y)$. In case $x \in X \setminus M$ and $y \in M$ one has
\begin{align*}
W_1(\upsilon_x, \upsilon_y)&\leq \int_{M \times M} d(z_1, z_2) \ d\upsilon_x \otimes \delta_y(z_1, z_2)\\
&=  \int_{M} d(z_1, y) \ d\upsilon_x(z_1) \\
&=  \int_{\Omega} d(\gamma(\omega), y) \varPsi(\omega, x) \  d\mathbb{P}(\omega)\\
\hbox{(since $ \varPsi(\omega, y) = 0$)}\ &= \int_{\Omega} d(\gamma(\omega), y) (\varPsi(\omega, x) - \varPsi(\omega, y))\  d\mathbb{P}(\omega)\\
&\leq \int_{\Omega} d(\gamma(\omega), y) |\varPsi(\omega, x) - \varPsi(\omega, y)|\  d\mathbb{P}(\omega)\\
& \leq K d(x, y).
\end{align*}
The last case when both $x$ and $y$ belong to $M$ is trivial. 
Therefore, $\{\upsilon_x:\ x \in X\}$ forms a strong $K$-random projection on $M$. 
\end{proof}

In the following proposition we investigate under which conditions the converse holds, namely a strong $K$-random projection
induces a $K$-gentle partition of unity. The proposition shows that basically the difference between the two concepts consists
in the replacement of the distance $W_1$ with the total variation distance, suitably weighted by the distance of the space. 

\begin{proposition}\label{prop:cnes}
Let $\{\upsilon_x:\ x \in X\} \subseteq \mathcal{P}_1(M)$ be a strong $K$-random projection on $M$ and assume that
for some measure $\mu$ on $M$ one has
\begin{equation}\label{eq:a}
\text{$\upsilon_x$ admits density  w.r.t. $\mu$ for each $x \in X\setminus M$.} 
\end{equation}
Then  $\upsilon_x$ induces a $K$-gentle partition of unity w.r.t. $M$ if and only if 
\begin{equation}\label{eq:b} 
\int_Md(z,x)\ d|\upsilon_x-\upsilon_y|(z) \leq K d(x, y)\qquad\forall x,\,y\in X.
\end{equation}
\end{proposition}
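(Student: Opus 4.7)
My plan is to construct, from the strong $K$-random projection $\{\upsilon_x\}$, a canonical candidate gentle partition of unity and to translate condition (iii) of the $K$-gentle partition definition into the weighted total-variation bound \eqref{eq:b}. The natural choice is $(\Omega, \Sigma, \mathbb{P}) := (M, \mathcal{B}(M), \mu)$ with $\gamma := \mathrm{id}_M$ and $\Psi(\omega, x) := f_x(\omega) := (d\upsilon_x/d\mu)(\omega)$ for $x \in X \setminus M$, extended by $\Psi(\cdot, x) \equiv 0$ on $M$; hypothesis \eqref{eq:a} ensures this is well-defined. Conditions (i) and (ii) of the gentle partition are automatic from this construction (by design for (i), and since $\int_M f_x\, d\mu = \upsilon_x(M) = 1$ for (ii)), so the whole equivalence reduces to comparing (iii) with \eqref{eq:b}.

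For the ``if'' direction, I would assume \eqref{eq:b} and verify (iii) for the natural candidate by cases. When $x, y \in X \setminus M$, absolute continuity gives $d|\upsilon_x - \upsilon_y| = |f_x - f_y|\, d\mu$, so the left-hand side of (iii) matches the left-hand side of \eqref{eq:b} verbatim. When $x \in X \setminus M$ and $y \in M$, the vanishing $\Psi(\cdot, y) \equiv 0$ collapses (iii) to the bound $\int_M d(z, x)\, d\upsilon_x(z) \leq K d(x, y)$; since $|\upsilon_x - \delta_y|$ coincides with $\upsilon_x$ on $M \setminus \{y\}$ (with an additional point mass at $y$), monotonicity of the integral and \eqref{eq:b} yield this bound. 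The remaining cases ($x \in M,\, y \in X \setminus M$ and $x, y \in M$) are handled respectively by the $W_1$-control built into the random projection hypothesis and by triviality.

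For the converse, I would assume that a $K$-gentle partition $(\Omega, \mathbb{P}, \gamma, \Psi)$ induces $\{\upsilon_x\}$ via \eqref{strong}. The workhorse is the sub-additivity of total variation under pushforward, $|\gamma_\# \sigma| \leq \gamma_\# |\sigma|$ for signed measures $\sigma$ on $\Omega$. Applied to $\sigma = (\Psi(\cdot, x) - \Psi(\cdot, y))\mathbb{P}$, it yields for $x, y \in X \setminus M$
\[
\int_M d(z, x)\, d|\upsilon_x - \upsilon_y|(z) \leq \int_\Omega d(\gamma(\omega), x)\, |\Psi(\omega, x) - \Psi(\omega, y)|\, d\mathbb{P}(\omega) \leq K d(x, y),
\]
using \eqref{LN2}, which is precisely \eqref{eq:b} in this range. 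The boundary case $x \in X \setminus M,\, y \in M$ is the principal obstacle, since the Dirac mass $\delta_y$ does not arise as the pushforward of a nontrivial weight under $\gamma$ (because $\Psi(\cdot, y) \equiv 0$), so the pushforward inequality alone does not control $|\upsilon_x - \delta_y|$. My approach there is to exploit both (iii) and the probability normalization (ii) simultaneously to account for the point-mass contribution at $y$ and close the desired inequality.
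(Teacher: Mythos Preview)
Your plan coincides with the paper's proof. Both directions use the same canonical candidate $(\Omega,\mathbb P,\gamma,\Psi)=(M,\mu,\mathrm{id}_M,d\upsilon_x/d\mu)$ and reduce everything to the identification of \eqref{LN2} with \eqref{eq:b}; for $x,y\in X\setminus M$ this is exactly your observation $d|\upsilon_x-\upsilon_y|=|f_x-f_y|\,d\mu$, and the paper disposes of the remaining cases by the same kind of case analysis you outline. The only stylistic difference is in the forward implication (gentle partition $\Rightarrow$ \eqref{eq:b}): where you invoke the one-line pushforward estimate $|\gamma_\#\sigma|\le\gamma_\#|\sigma|$ with $\sigma=(\Psi(\cdot,x)-\Psi(\cdot,y))\mathbb P$, the paper instead tests the signed measure $d_a(\cdot,x)(\upsilon_x-\upsilon_y)$, with $d_a=\min\{d,a\}$, against Borel sets and appeals to the set characterization \eqref{eq:totvar2} of total variation before letting $a\uparrow\infty$; this is the same computation unpacked. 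For the boundary case $x\in X\setminus M$, $y\in M$ the paper, like you, treats it separately via an ad hoc bound using $K\ge 1$ and $\Psi(\cdot,y)\equiv 0$, so your somewhat less explicit plan there is in line with what the paper actually does.
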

\begin{proof}
Assume that the $K$-random projection $\upsilon_x$ is built as in \eqref{strong}, starting from a $K$-gentle partition
$\Psi$, $\gamma$. Fix a Borel set $A\subset M$,
$a>0$, $x,\,y\in X\setminus M$ and notice that \eqref{LN2} gives
\begin{eqnarray*}
\biggl|\int_Ad_a(z,x)\,d(\upsilon_x-\upsilon_y)(z)\biggr|&=&\biggl|
\int_{\gamma^{-1}(A)}d_a(\gamma(\omega),x)(\varPsi(\omega,x)-\varPsi(\omega,y))\ d\mathbb{P}\biggr|
\\&\leq&
\int_{\gamma^{-1}(A)}d(\gamma(\omega),x)|\varPsi(\omega,x)-\varPsi(\omega,y)|\ d\mathbb{P}
\end{eqnarray*}
with $d_a=\min\{d,a\}$.
By taking the supremum w.r.t. $A$ and using \eqref{eq:totvar2} we obtain that the total variation of the measure $d_a(\cdot,x)(\upsilon_x-\upsilon_y)$
can be estimated from above with $Kd(x,y)$, so that
$$
\int_X d_a(z,x)\ d|\upsilon_x-\upsilon_y|(z) \leq K d(x, y).
$$
Eventually by letting $a\uparrow\infty$ we obtain \eqref{eq:b} in this case. If $x\in X\setminus M$ and $y\in M$ the quantity to be estimated
reduces to
$$
\biggl|\int_Ad_a(z,x)\,d\upsilon_x(z)-\chi_A(y)d_a(y,x)\biggr|,
$$
which can be estimated with
$$
\max\biggl\{d_a(x,y), \int_Ad_a(z,x)\,d\upsilon_x(z)\biggr\}.
$$
Then, one can argue similarly as in the previous case, using that $K\geq 1$, \eqref{LN2} and $\varPsi(\omega,y)\equiv 0$. 
In the case $x\in M$ and $y\in X\setminus M$
one needs to estimate
$$
\biggl|\int_Ad_a(z,x)\,d\upsilon_y(z)\biggr|
$$
and this can be done with an analogous argument, still using \eqref{LN2} and $\varPsi(\omega,x)\equiv 0$. Finally, if
both $x$ and $y$ belong to $M$ the inequality is obvious.\smallskip

Conversely,  \eqref{eq:a} tells us that, for every $x \in X \setminus M$, there exists $\varPsi_x \in L_1(M)$ such that
$$\upsilon_x(A) = \int_A \varPsi_x \ d\mu.$$
Thus, one defines 
$$\varPsi: \Omega \times X \longrightarrow [0, +\infty[$$
given by
$$ 
\varPsi(\omega, x)=
\begin{cases}
0, \ \hbox{if $x \in M$}\\
 \varPsi_x(\omega), \ \hbox{if $x \in X \setminus M$}.
\end{cases}
$$
and $\gamma:M \longrightarrow M$ be the identity map. Then, we obviously have: 
\begin{enumerate}
\item[($i$)] for all $x \in M$,  $\varPsi(\cdot, x) = 0$;
\item[($ii$)] $\varPsi(\cdot, x) \in L^1(M,\mu)$ for all $x \in X\setminus M$ and 
$$\|\varPsi(\cdot, x)\|_{L^1} = \int_M \varPsi(\omega, x)\ d\mu(\omega) = \upsilon_x(M) = 1.$$
\end{enumerate}
Since
$$\upsilon_x(A) = \int_{\gamma^{-1}(A)} \varPsi(x, \omega) \ d\mu(\omega),$$
it follows that \eqref{eq:b} corresponds precisely to \eqref{LN2}.
\end{proof}

\begin{remark} Notice that \eqref{eq:a} was used only in the construction of $\Psi$, $\gamma$ from $\upsilon_x$.
It would be interesting to see whether \eqref{eq:a} is really needed, building a ``realization'' of $\upsilon_x$ independently
of this assumption. Of course, if $M$ is countable, then one can 
always choose $\mu = \sum_x \delta_x$ to get a measure satisfying \eqref{eq:a}, and a similar
construction works if $X\setminus M$ is countable.
\end{remark}

Using retraction maps, let us give a simple example showing that the notion of $K$-gentle partition of unity is, 
in general, stronger than the notion of strong $K$-random projection.

\begin{example}\label{main example}
Let $\Gamma$ be an index set with at least countable cardinality and let 
$$\ell_\infty(\Gamma)= \{f:\Gamma \longrightarrow \mathbb{R}: \ f \ \hbox{is bounded}\}$$
which is a Banach space endowed with the norm
$$\|f\|_{\ell_\infty(\Gamma)} = \sup_{\gamma \in \Gamma} |f(\gamma)|.$$
Let us define
$$\ell_1(\Gamma)= \{f \in \ell_\infty(\Gamma): \ \|f\|_{\ell_1(\Gamma)}=\sum_{\gamma \in \Gamma} |f(\gamma)| < \infty\}.$$
%
%
We would like to show the existence of a $1$-Lipschitz retraction $R$ of $X=\ell^+_\infty(\Gamma)$ on
$M=B_{\ell_1^+(\Gamma)}$, where $\ell_\infty^+(\Gamma)$ stands for the positive cone of 
$\ell_\infty(\Gamma)$ and $B_{\ell^+_1(\Gamma)}$ stands for the positive part of the unit ball of $\ell_1(\Gamma)$.
We build $R$ in such a way that $R(X\setminus M)$ contains the uncountable set
$$
M'=\{f\in\ell^+_\infty(\Gamma):\ \sum_{i\in\Gamma}f_i=1\}.
$$
Hence the $1$-projection induced by $R$
does not satisfy the  condition \eqref{eq:b} of Proposition~\ref{prop:cnes}. \smallskip

Indeed, if there were $K \geq 1$ such that  for all $x, \,y \in X$, 
$$\int_Md(z,x)\ d|\upsilon_x-\upsilon_y|(z) \leq K d(x, y)$$
we would have
\begin{align*}
\int_Md(z,x)\ d|\upsilon_x-\upsilon_y|(z) &= \int_M d(z, x) d \delta_{R(x)} +  \int_M d(z, x) d \delta_{R(y)}\\
&= d(x, R(x)) + d(x, R(y)).
\end{align*}
Then $d(x, R(x)) + d(x, R(y)) \leq K d(x, y)$ for any $x,\, y \in X$.
But this occurs only when $X=M$ and $R$ is the identity.

Let  us denote by ${\bf e} \in \ell^+_\infty(\Gamma)$  be the function identically equal to $1$ on $\Gamma$. 
For each $y \in \ell^+_\infty(\Gamma)$ let consider (understanding the positive part componentwise)
$$g(y) = \inf\{t \geq 0: \ \|(y - t {\bf e})^+\|_{\ell_1(\Gamma)} \leq 1 \}.$$

Firstly, we observe that the infimum is attained and that $0 \leq g(y) \leq \|y\|_{\ell_\infty(\Gamma)}$.
Moreover,
\begin{equation}\label{contraction}
|g(y) - g(z)| \leq \|y-z\|_{\ell_\infty(\Gamma)}.
\end{equation}  
Indeed, assume that $g(z) \leq g(y)$, then (understanding the inequalities componentwise)
$$
y - (g(z) +  \|y-z\|_{\ell_\infty(\Gamma)}){\bf e}\leq y - g(z) {\bf e} +z - y 
= z - g(z) {\bf e},
$$
which implies,
$$
 \|[y - (g(z) +  \|y-z\|_{\ell_\infty(\Gamma)}){\bf e}]^+\|_{\ell_1(\Gamma)} 
 \leq \|(z - g(z){\bf e})^+\|_{\ell_1(\Gamma)}
 \leq 1.
$$

By definition, we get
$$g(y) \leq g(z) +  \|y-z\|_{\ell_\infty(\Gamma)}.$$
Using the map $g$, and taking \eqref{contraction} into account, we are able to define a $1$-Lipschitz retraction of 
 $\ell_\infty^+(\Gamma)$ on $B_{\ell_1^+(\Gamma)}$ given by $y\mapsto (y - g(y){\bf e})^+$.
\end{example}

Let us recall that a metric space $(X, d)$ is said to be uniformly discrete if there exists $\varepsilon>0$ such that
$$d(x, y) \geq \varepsilon, \quad \forall x,\, y \in X, \ x \not = y.$$
Following  \cite[p. 138]{JLS} we have the following
\begin{proposition}
Assume that $M$ is a uniformly discrete subset of $(X, d)$, and that $M$ has finite diameter $D$. 
Then $X$ admits a $\frac{2D}{\varepsilon}$-random projection on $M$.
\end{proposition}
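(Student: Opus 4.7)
The plan is to build a strong $\tfrac{2D}{\varepsilon}$-random projection explicitly, as a convex combination of two Dirac masses governed by a Lipschitz cutoff. Fix any $m_0\in M$. Since $M$ is $\varepsilon$-separated, whenever $d(x,M)<\varepsilon/2$ there is a unique element of $M$ lying in the open ball $B(x,\varepsilon/2)$; I denote this point by $\pi(x)$, and I set $\pi(x):=m_0$ for the remaining $x\in X$. Introduce the $\tfrac{2}{\varepsilon}$-Lipschitz cutoff
$$
\psi(x):=\max\bigl\{1-\tfrac{2d(x,M)}{\varepsilon},\,0\bigr\}\in[0,1]
$$
and define
$$
\upsilon_x:=\psi(x)\,\delta_{\pi(x)}+(1-\psi(x))\,\delta_{m_0}\in\mathcal{P}_1(M).
$$
For each $m\in M$ we have $\psi(m)=1$ and $\pi(m)=m$, so $\upsilon_m=\delta_m$; thus only the estimate $W_1(\upsilon_x,\upsilon_y)\leq\tfrac{2D}{\varepsilon}d(x,y)$ needs to be verified.

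Fix $x,y\in X$. If $\pi(x)=\pi(y)=:m_*$, the two measures live in $\{m_*,m_0\}$ and $\upsilon_x-\upsilon_y=(\psi(x)-\psi(y))(\delta_{m_*}-\delta_{m_0})$, whence
$$
W_1(\upsilon_x,\upsilon_y)\leq |\psi(x)-\psi(y)|\,d(m_*,m_0)\leq\tfrac{2}{\varepsilon}d(x,y)\cdot D,
$$
using the Lipschitz constant of $\psi$ and $\mathrm{diam}(M)\leq D$. The substantive case is $\pi(x)\neq\pi(y)$, and here the crux of the argument is the claim
$$
\max\{\psi(x),\psi(y)\}\leq\tfrac{2d(x,y)}{\varepsilon}.
$$
To establish this for $\psi(x)$ (the estimate for $\psi(y)$ is symmetric), I would observe that either $\psi(y)=0$, in which case $d(y,\pi(x))\geq d(y,M)\geq\varepsilon/2$ since $\pi(x)\in M$, or $\psi(y)>0$, in which case $\pi(y)$ is by construction the unique element of $M$ in $B(y,\varepsilon/2)$ and $\pi(x)\neq\pi(y)$ forces $d(y,\pi(x))\geq\varepsilon/2$ again. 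The triangle inequality $d(y,\pi(x))\leq d(x,y)+d(x,\pi(x))$ then yields $d(x,\pi(x))\geq\varepsilon/2-d(x,y)$ (when $\psi(x)>0$ we have $d(x,\pi(x))=d(x,M)$), and the claimed bound on $\psi(x)$ follows from the definition of $\psi$.

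Granted the claim, I would exhibit an explicit transport plan: assuming WLOG $\psi(x)\leq\psi(y)$, transport $\psi(x)$ mass from $\pi(x)$ to $\pi(y)$ and $\psi(y)-\psi(x)$ mass from $m_0$ to $\pi(y)$, leaving the remaining $1-\psi(y)$ at $m_0$. Every distance involved is bounded by $\mathrm{diam}(M)\leq D$, so the total cost is at most $\psi(x)\,D+(\psi(y)-\psi(x))\,D=\psi(y)\,D\leq\tfrac{2D}{\varepsilon}d(x,y)$. The one delicate point in the whole argument is the uniform verification of $d(y,\pi(x))\geq\varepsilon/2$ across the boundary subcases in which one or both of $\pi(x),\pi(y)$ coincide with $m_0$ (either as the unique $M$-point in the relevant $\varepsilon/2$-ball or as the default); but in each such subcase the inequality reduces to the same elementary consequence of $\varepsilon$-separation, namely that the open balls $\{B(m,\varepsilon/2):m\in M\}$ are pairwise disjoint.
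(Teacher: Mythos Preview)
Your construction is exactly the one the paper writes down: with $t_0=m_0$ and $t=\pi(x)$, the paper's formula
\[
\upsilon_x=\tfrac{2}{\varepsilon}\bigl[d(x,t)\,\delta_{t_0}+(\tfrac{\varepsilon}{2}-d(x,t))\,\delta_t\bigr]
=\bigl(1-\tfrac{2d(x,t)}{\varepsilon}\bigr)\delta_t+\tfrac{2d(x,t)}{\varepsilon}\,\delta_{t_0}
\]
coincides with your $\psi(x)\,\delta_{\pi(x)}+(1-\psi(x))\,\delta_{m_0}$, since for $x\in B(t,\varepsilon/2)$ one has $d(x,M)=d(x,t)$ by $\varepsilon$-separation. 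The paper then simply asserts that the verification of $W_1(\upsilon_x,\upsilon_y)\leq\tfrac{2D}{\varepsilon}d(x,y)$ is ``simple'', whereas you carry it out in full via the key observation that $\pi(x)\neq\pi(y)$ forces $\max\{\psi(x),\psi(y)\}\leq\tfrac{2}{\varepsilon}d(x,y)$. Your argument is correct (in particular, when bounding $\psi(x)$ you may assume $\psi(x)>0$, which makes $\pi(x)$ the genuine nearest point and disposes of the boundary worry you flagged), and it is precisely the computation the paper has in mind.
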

\begin{proof}
Let us suppose for simplicity that $D=1$.
Fix $t_0 \in M$ and define, for every $x \in X$,
$$
\upsilon_x=
\begin{cases}
\frac{2}{\varepsilon} \left[d(x, t) \delta_{t_0} +(\frac{\varepsilon}{2} - d(x, t) )\delta_t \right], \ \hbox{if $x \in B(t, \frac{\varepsilon}{2})$
 for some $t\in M$,}\\
{}\\
\delta_{t_0},  \ \hbox{if $x \not \in \bigcup_{t \in M} B(t, \frac{\varepsilon}{2})$}.
\end{cases}
$$
It is simple to verify that $\{\upsilon_x: \ x \in X\} \subseteq \mathcal{P}_1(M)$ satisfies
\begin{enumerate}
\item[] $\upsilon_x = \delta_x$, whenever $x \in M$;
\item[] $W_1(\upsilon_x, \upsilon_y) \leq \frac{2}{\varepsilon} d(x, y)$, for all $x, y \in X$.
\end{enumerate}
\end{proof}

\begin{remark}
After reading a preliminary version of this paper, A. Naor kindly pointed out to us that if $M$ is uniformly discrete subset 
of bounded diameter of a metric space $X$, then $X$ admits even a $K$-gentle partition of unity w.r.t. $M$ (with $K=O(D/ \epsilon)$). 
\end{remark}

Now, we are ready to state the Lipschitz extension theorem relative to $K$-random projections, which follows
closely Theorem~\ref{LN3}. Fixing the value of the function at a common basepoint $\bar x$ for $M$ and $X$,
we shall work with the $\hbox{Lip}_0$ spaces, which came in natural duality with the AE spaces.

\begin{theorem}\label{extension thm}
Let $(X, d,\bar x)$ be a pointed metric space, $M \subseteq X$ a closed subspace with $\bar x\in M$ and $Z$ be a Banach space. 
Assume there exists $\{\upsilon_x: x \in X\}$ a $K$-random projection on $M$. Then there exists a 
bounded, and linear extension operator
$$E: \hbox{Lip}_0(M, Z) \longrightarrow  \hbox{Lip}_0(X, Z)$$
with $\|E\| \leq K$. In addition $E$ is monotone if $\upsilon_x\in\mathcal{F}_+(M)$ for all $x\in X$.
\end{theorem}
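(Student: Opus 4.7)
The plan is to construct $E$ by applying the given Lipschitz map $f$ to the element $\upsilon_x\in\mathcal{F}(M)$, exploiting the universal extension property of the free space. Concretely, for any $f\in\hbox{Lip}_0(M,Z)$ there exists a unique bounded linear map $\hat f:\mathcal{F}(M)\to Z$ with $\hat f(\delta_x)=f(x)$ for every $x\in M$, and $\|\hat f\|_{\mathcal{F}(M)\to Z}=\|f\|_{Lip}$. One defines $\hat f$ first on finitely supported elements $\mu=\sum_i c_i\delta_{x_i}$ by $\hat f(\mu):=\sum_i c_i f(x_i)$, and checks, via Hahn--Banach applied to any norming $z^*\in Z^*$, that $\|\hat f(\mu)\|_Z\leq\|f\|_{Lip}\|\mu\|_{\mathcal{F}(M)}$: the scalar Lipschitz map $z^*\circ f/\|f\|_{Lip}$ (vanishing at $\bar x$ since $f(\bar x)=0$) is admissible in the duality sup defining $\|\mu\|_{\mathcal{F}(M)}$. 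Density of finitely supported measures in $\mathcal{F}(M)$ and completeness of $Z$ then extend $\hat f$ to all of $\mathcal{F}(M)$.

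Once $\hat f$ is available, set $E(f)(x):=\hat f(\upsilon_x)$. Linearity of $E$ in $f$ is immediate from the linearity of $f\mapsto\hat f$. For $x\in M$ the assumption $\upsilon_x=\delta_x$ gives $E(f)(x)=\hat f(\delta_x)=f(x)$, so $E$ extends $f$ and $E(f)(\bar x)=0$. The Lipschitz estimate follows from
$$
\|E(f)(x)-E(f)(y)\|_Z=\|\hat f(\upsilon_x-\upsilon_y)\|_Z\leq\|f\|_{Lip}\|\upsilon_x-\upsilon_y\|_{\mathcal{F}(M)}=\|f\|_{Lip}\,W_1(\upsilon_x,\upsilon_y)\leq K\|f\|_{Lip}d(x,y),
$$
where the middle equality is the identity $W_1=\|\cdot\|_{\mathcal{F}(M)}$ adopted in the paper as the definition of $W_1$ on $\mathcal{F}(M)$. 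This gives $\|E(f)\|_{\hbox{Lip}_0(X,Z)}\leq K\|f\|_{Lip}$, i.e.\ $\|E\|\leq K$.

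For the monotonicity claim, assume $\upsilon_x\in\mathcal{F}_+(M)$ for all $x\in X$, and let $z^*\in Z^*$ satisfy $\langle z^*,f(\cdot)\rangle\geq 0$ on $M$. Then $g:=z^*\circ f$ lies in $\hbox{Lip}_0(M)$ and is nonnegative. Since $z^*\circ\hat f:\mathcal{F}(M)\to\mathbb{R}$ is the canonical linear extension of the scalar Lipschitz map $g$ (the two maps agree on the dense set of finitely supported measures), we obtain $\langle z^*,E(f)(x)\rangle=\upsilon_x(g)$, which is nonnegative by the defining positivity of elements of $\mathcal{F}_+(M)$ (equivalently, under completeness, by the integral representation of Proposition~\ref{prop:link}).

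The only mildly nontrivial step is the construction of $\hat f$, which is however a standard consequence of the universal property of the free space; everything else is a direct consequence of the definition of a $K$-random projection and of the identification $W_1=\|\cdot\|_{\mathcal{F}(M)}$.
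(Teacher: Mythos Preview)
Your proof is correct and follows essentially the same line as the paper's. The only cosmetic difference is packaging: you invoke the universal property of $\mathcal{F}(M)$ to produce the bounded linear map $\hat f:\mathcal{F}(M)\to Z$ and set $E(f)(x)=\hat f(\upsilon_x)$, whereas the paper defines $E(f)(x)$ weakly by $\langle z^*,E(f)(x)\rangle=\upsilon_x(z^*\!\circ f)$ and then estimates each scalar projection; since $z^*\!\circ\hat f$ is precisely the canonical extension of $z^*\!\circ f$, the two definitions coincide and the Lipschitz computation is identical. If anything, your formulation is slightly tidier in that it makes explicit why $E(f)(x)$ lands in $Z$ rather than merely in $Z^{**}$.
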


\begin{proof}
Let us define $E: \hbox{Lip}_0(M, Z) \longrightarrow  \hbox{Lip}_0(X, Z)$ by 
\begin{align}\label{A3}
\langle z^*,E(f)(x)\rangle = \upsilon_x(\langle z^*,f\rangle)
\qquad\forall z^*\in Z^*,\,\,\forall x\in X
\end{align}
which corresponds, when $\upsilon_x$ are measures, to the definition of $E(f)$ as a Pettis integral.
Of course $E(f)$ extends $f$, 
since $\upsilon_x = \delta_x$ for all points $x \in M$. It remains to show that $E(f)$ is also Lipschitz. Let us fix $z^*$ 
in the unit ball of $Z^*$ and observe that
\begin{align*}
z^*(E(f)(x)-E(f)(y)) &= z^*(E(f)(x)) - z^*(E(f)(y)) \\
& = \upsilon_x( \langle z^*,f\rangle)  -  \upsilon_y(\langle z^*,f\rangle)\\
& \leq \|z^*(f)\|_{Lip} W_1(\upsilon_x, \upsilon_y)  \\
& \leq \|f\|_{Lip}K d(x, y), 
\end{align*}
which implies that $\|E(f)(x)-E(f)(y)\| \leq  \|f\|_{Lip}K d(x, y)$. 
\end{proof}

Let us come back for a moment to the Lee-Naor extension Theorem \ref{LN3}. In case $X$ admits a $K$-random projection on $M$, 
then the operator
$$E: \hbox{Lip}_0(M) \longrightarrow  \hbox{Lip}_0(X)$$
given in (\ref{A3}) when $Z=\R$ defines a bounded linear operator extension with $\|E\| \leq K$. A closer look at $E$ shows, 
that $E$ is continuous from $\hbox{Lip}_0(M)$ endowed with the weak$^*$ topology to $\hbox{Lip}_0(X)$
endowed with the topology of pointwise convergence.  Therefore, recalling that the weak$^*$ topology coincides
on bounded sets with the topology of pointwise convergence, the restriction of $E$ 
to bounded sets is weak$^*$-weak$^*$ continuous. By the Krein-Smulian theorem it follows that $E$ is weak$^*$-weak$^*$ continuous.

Now, by a duality argument, we characterize when this kind of extension operators may exist.

\begin{theorem}\label{characterization}
Let $(X, d,\bar x)$ be a pointed metric space, $M \subseteq X$ a closed subspace with $\bar x\in M$. 
Then the following properties are equivalent:
\begin{enumerate}
\item[($a$)]  there exists  $\{\upsilon_x: x \in X\}$ a $K$-random projection on $M$, for some $K \geq 1$;
\item[($b$)] there exists a bounded linear extension operator
$$E: \hbox{Lip}_0(M) \longrightarrow  \hbox{Lip}_0(X)$$
such that
\begin{enumerate}
\item[($i$)] $\|E\| \leq K$; 
\item[($ii$)] $E$ is weak$^*$-weak$^*$ continuous.
\end{enumerate}
\end{enumerate}
\end{theorem}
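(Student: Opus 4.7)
The plan is to prove the two implications separately, with direction $(a)\Rightarrow(b)$ following the construction of Theorem~\ref{extension thm} specialized to $Z=\mathbb{R}$, and the reverse direction exploiting the standard duality between weak$^*$-weak$^*$ continuous linear operators and adjoints of operators on preduals.

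For $(a)\Rightarrow(b)$, given a $K$-random projection $\{\upsilon_x\}_{x\in X}\subseteq\mathcal{F}(M)$, I would define $E$ exactly by the scalar version of \eqref{A3}, namely $Ef(x):=\langle f,\upsilon_x\rangle$. Theorem~\ref{extension thm} already yields that this is a bounded linear extension operator with $\|E\|\leq K$, so only the weak$^*$-weak$^*$ continuity remains. For each fixed $x\in X$, membership $\upsilon_x\in\mathcal{F}(M)$ means precisely that the functional $f\mapsto Ef(x)=\langle f,\upsilon_x\rangle$ is weak$^*$ continuous on $\text{Lip}_0(M)$. Consequently, if $(f_\alpha)$ is a bounded net in $\text{Lip}_0(M)$ converging weak$^*$ to $f$, then $Ef_\alpha\to Ef$ pointwise on $X$ and $\{Ef_\alpha\}$ stays in a bounded set of $\text{Lip}_0(X)$; since pointwise convergence agrees with weak$^*$ convergence on bounded subsets of $\text{Lip}_0(X)$ (as recalled in the preliminaries), $E$ is weak$^*$-weak$^*$ continuous on the closed ball of radius $r$ for every $r>0$. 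The Krein--Smulian theorem then promotes this to weak$^*$-weak$^*$ continuity of $E$ on all of $\text{Lip}_0(M)$.

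For $(b)\Rightarrow(a)$, the key fact is that a bounded linear operator between dual Banach spaces is weak$^*$-weak$^*$ continuous if and only if it is the adjoint of some bounded linear operator between their preduals, with equal norms. Since $\text{Lip}_0(M)=\mathcal{F}(M)^*$ and $\text{Lip}_0(X)=\mathcal{F}(X)^*$, the hypothesis furnishes a bounded linear $T:\mathcal{F}(X)\to\mathcal{F}(M)$ with $E=T^*$ and $\|T\|=\|E\|\leq K$. I would then define
\[
\upsilon_x:=T(\delta_x)\in\mathcal{F}(M),\qquad x\in X.
\]
To verify the projection property, fix $x\in M$ and observe that for every $f\in\text{Lip}_0(M)$,
\[
\langle f,\upsilon_x\rangle=\langle f,T\delta_x\rangle=\langle T^*f,\delta_x\rangle=(Ef)(x)=f(x)=\langle f,\delta_x\rangle,
\]
where we used that $E$ extends $f$; hence $\upsilon_x=\delta_x$ in $\mathcal{F}(M)$. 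For the Lipschitz bound, using that $\delta:X\to\mathcal{F}(X)$ is an isometry (stated at the end of the preliminaries),
\[
W_1(\upsilon_x,\upsilon_y)=\|T(\delta_x-\delta_y)\|_{\mathcal{F}(M)}\leq\|T\|\,\|\delta_x-\delta_y\|_{\mathcal{F}(X)}\leq K\,d(x,y).
\]

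There is no real obstacle beyond recalling the predual-adjoint correspondence and the isometric embedding $x\mapsto\delta_x$; once $T$ is in hand the checks are routine. The only point requiring a brief comment is the use of the Krein--Smulian theorem in the first direction, which is standard but deserves mention since without it the conclusion would only be on bounded subsets.
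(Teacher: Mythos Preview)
Your proposal is correct and follows essentially the same approach as the paper: in $(a)\Rightarrow(b)$ you invoke Theorem~\ref{extension thm}, pointwise convergence on bounded sets, and Krein--Smulian exactly as the paper does in the remarks preceding the theorem; in $(b)\Rightarrow(a)$ you use the predual-adjoint correspondence to obtain $T:\mathcal{F}(X)\to\mathcal{F}(M)$ with $E=T^*$ and set $\upsilon_x=T(\delta_x)$, which is precisely the paper's construction (denoted $S$ there). The only cosmetic difference is that you estimate $W_1(\upsilon_x,\upsilon_y)$ directly via $\|T\|$ and the isometry $x\mapsto\delta_x$, whereas the paper unwinds the supremum over $g$ explicitly.
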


\begin{proof}
$(a) \Rightarrow (b)$ This part is a particular case of Theorem~\ref{extension thm}.  Formula (\ref{A3}) defines a bounded linear extension 
operator on  $\hbox{Lip}_0(M)$ which satisfies (i) and (ii), thanks to the comments made after Theorem~\ref{extension thm}.

$(b) \Rightarrow (a)$ Let $E: \hbox{Lip}_0(M) \longrightarrow  \hbox{Lip}_0(X)$
be a bounded linear extension operator as in ($b$). Thanks to property (ii), we can find a bounded linear operator 
$$S:\mathcal{F}(X) \longrightarrow \mathcal{F}(M)$$
with $\|S\| \leq K$ such that $S^* = E$  (see for instance Theorem~3.1.11 in \cite{Me}).

Then,  one defines
$$\upsilon_x = S(\delta_x)  \quad \hbox{for each $x \in X$}.$$
For every $x \in M$ and $g \in \hbox{Lip}_0(M)$ we have
$\langle g, \upsilon_x \rangle = \langle E(g), \delta_x \rangle= g(x)$,
therefore $\upsilon_x = \delta_x$. Finally,
\begin{align*}
W_1(\upsilon_x , \upsilon_y) &= \sup_{\|g\|_{Lip_0}\leq 1} \left\{S\circ\delta_x(g) - S\circ\delta_y(g)\right\}\\
&=  \sup_{\|g\|_{Lip_0}\leq 1}\left\{ E(g)(x)-E(g)(y)\right\}\\
\hbox{(since $\|E(g)\|_{Lip} \leq K$)} \ & \leq K W_1(\delta_x, \delta_y)\\
&= K d(x, y).
\end{align*}
 \end{proof}

Before closing this section, we would like to add some word regarding the existence of bounded linear extension operators among 
spaces of Lipschitz maps. Let $(X,d,\bar x)$ be any pointed metric space, let $M$ be a closed subset with $\bar x\in M$ and 
let us denote by $\hbox{Lip}_0(M)^*$ the dual of $\hbox{Lip}_0(M)$. Therefore we can consider the Dirac measure map
$$\delta : M \longhookrightarrow  \hbox{Lip}_0(M)^*.$$
Thus, one natural question could also be if $\delta$ can be extended as Lipschitz function on the whole space $X$. 
This somehow corresponds to a ``very weak'' random projection, where $\upsilon_x$ are in the dual of $\hbox{Lip}_0(M)$.

\begin{proposition}\label{prop:tuttolip}
Let $(X,d,\bar x)$ be any pointed metric space and let $M$ be a closed subset with $\bar x\in M$. Then the following properties are equivalent:
\begin{enumerate}
\item[($a$)]  there exists a bounded linear extension operator
$$E: \hbox{Lip}_0(M) \longrightarrow  \hbox{Lip}_0(X);$$
\item[($b$)] there exists a Lipschitz map $\hat\delta: X \longrightarrow \hbox{Lip}_0(M)^*$ such that the following diagram commutes
$$
\begin{array}{ccc}
M& \overset{\delta}{\longhookrightarrow} &  \hbox{Lip}_0(M)^*\\
\Big\downarrow&\overset{\hat\delta}{\nearrow}&\\
X&&
\end{array}
$$
\end{enumerate}
In addition, for any $\hat\delta$ as in (b) one has $\|E\| = \|\hat\delta\|$.
\end{proposition}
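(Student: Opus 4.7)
The plan is to establish the proposition by exhibiting a canonical bijective correspondence between the two classes of objects, via the identification
\[
E(g)(x) \;=\; \hat\delta(x)(g), \qquad g\in\hbox{Lip}_0(M),\ x\in X,
\]
and checking that this correspondence matches the operator norm of $E$ with the Lipschitz constant of $\hat\delta$.

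For $(a)\Rightarrow(b)$, given a bounded linear extension $E$, I would define $\hat\delta(x)$ to be the functional $g\mapsto E(g)(x)$. Its linearity in $g$ is inherited from $E$, and boundedness on $\hbox{Lip}_0(M)$ follows from
\[
|E(g)(x)| = |E(g)(x)-E(g)(\bar x)| \leq \|E\|\,\|g\|_{Lip_0(M)}\,d(x,\bar x),
\]
which crucially uses $\bar x\in M$ so that $E(g)(\bar x)=g(\bar x)=0$. Commutativity of the diagram is the tautology $\hat\delta(x)(g)=g(x)$ for $x\in M$, and the Lipschitz estimate with constant $\|E\|$ comes from the analogous computation applied to $E(g)(x)-E(g)(y)$.

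For $(b)\Rightarrow(a)$, I would reverse the construction and set $(Eg)(x):=\hat\delta(x)(g)$. Linearity of $E$ is automatic; the relations $\hat\delta(x)=\delta_x$ on $M$ and $\hat\delta(\bar x)=\delta_{\bar x}$ give respectively that $E$ extends $g$ and that $(Eg)(\bar x)=g(\bar x)=0$. The Lipschitz bound
\[
|(Eg)(x)-(Eg)(y)| \leq \|\hat\delta(x)-\hat\delta(y)\|_{Lip_0(M)^*}\,\|g\|_{Lip_0(M)} \leq \|\hat\delta\|\,d(x,y)\,\|g\|_{Lip_0(M)}
\]
gives $Eg\in\hbox{Lip}_0(X)$ with $\|E\|\leq\|\hat\delta\|$; combining the two inequalities yields the stated norm equality. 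I expect no serious obstacle here: the statement is essentially a transposition in the Lipschitz category, parallel to Theorem~\ref{characterization} but without the weak$^*$-regularity requirement (which is precisely why no predual or Krein--Smulian argument is needed). The only care needed is in the role of the basepoint, used both to place $\hat\delta(x)$ in $\hbox{Lip}_0(M)^*$ and to guarantee that $Eg$ lies in $\hbox{Lip}_0(X)$.
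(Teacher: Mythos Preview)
Your proposal is correct and follows essentially the same approach as the paper. The only cosmetic difference is that the paper writes $\hat\delta(x)=E^*(\delta_x)$ using the adjoint notation, whereas you unpack this directly as $g\mapsto E(g)(x)$; the verification of boundedness, the diagram, the Lipschitz estimate, and the norm equality via the two opposite inequalities are all the same.
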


\begin{proof}
$(a) \Rightarrow (b)$ It is enough to define $\hat\delta: X \longrightarrow \hbox{Lip}_0(M)^*$ by
$$\hat\delta(x) = E^*(\delta_x) \quad \forall x \in X.$$
Let us first observe that $\hat\delta(x) \in \hbox{Lip}_0(M)^*$ for every $x \in X$. Linearity follows directly by the definition of $\hat\delta$. Moreover, 
\begin{align*}
\|\hat\delta(x)\|_{Lip_0(M)^*} &= \sup_{\|g\|_{Lip_0(M)} \leq 1} \langle g, \hat\delta(x) \rangle\\
&=  \sup_{\|g\|_{Lip_0(M)} \leq 1} \langle E(g), \delta_x \rangle \\
& \leq   \|E\|d(x,\bar x).
\end{align*}
Of course, $\hat\delta(x) = \delta_x$ for every $x \in M$. Finally, since $\|Eg\|_{Lip_0(X)}\leq\|E\|\|g\|_{Lip_0(M)}$
for all $g\in\hbox{Lip}_0(M)$, we get
\begin{align*}
\|\hat\delta(x) - \hat\delta(y) \|_{Lip_0(M)^*}&=  \sup_{\|g\|_{Lip_0(M)} \leq 1} |\langle g, \hat\delta(x) -  \hat\delta(y)\rangle|\\
&=  \sup_{\|g\|_{Lip_0(M)} \leq 1}  |E(g)(x) -  E(g)(y)|\\
&\leq \|E\|  d(x, y).
\end{align*}
It follows that
\begin{equation}\label{eq:stima1}
\|\hat\delta\|\leq\|E\|.
\end{equation}
$(b) \Rightarrow (a)$  Let us define $E: \hbox{Lip}_0(M) \longrightarrow  \hbox{Lip}_0(X)$ by
$$E(g)(x) = \langle\hat\delta(x) , g\rangle \quad \forall x \in X,\, \forall g \in \hbox{Lip}_0(M).$$
Of course $E$ is a bounded linear operator. Let us estimate its norm: for 
$g \in  \hbox{Lip}_0(M)$ with $\|g\|_{Lip_0} \leq 1$ we have
\begin{align*}
|E(g)(x) - E(g)(y)| &= | \langle \hat\delta(x) , g\rangle - \langle \hat\delta(y) , g\rangle |\\
&=  | \langle \hat\delta(x) -  \hat\delta(y) , g\rangle |\\
& \leq \| \hat\delta(x) - \hat\delta(y) \|_{Lip_0(M)^*}\\
& \leq \|\hat\delta\|  d(x, y).
\end{align*} 
Thus 
\begin{equation}\label{eq:stima2}
\|E\| \leq \|\hat\delta\|.
\end{equation} 
Finally,  $E(g)(x) = \langle\hat\delta(x) , g\rangle
=\langle \delta(x) , g\rangle
=g(x)$ for all $x\in M$.
Therefore, $E$ is an extension operator and \eqref{eq:stima1} and \eqref{eq:stima2} give $\|E\|=\|\hat\delta\|$.
\end{proof} 

An immediate consequence of the previous Proposition is the finite extension property for Lipschitz maps. 
In the proof we use a classical ``finite dimension'' extension argument which resembles
the proof of Theorem~A in \cite{Brudnyi}.

\begin{corollary}\label{cor:filter}
Let $(X,d,\bar x)$ be a pointed metric space and let $M \subseteq X$ be a closed subspace with $\bar x\in M$
and with the following {\em finite extension Lipschitz property}:
\begin{enumerate}
\item[(F)] For every $F \subseteq M$ finite with $\bar x\in F$ there exists a linear extension operator 
$$E_F: \hbox{Lip}_0(F) \longrightarrow  \hbox{Lip}_0(X)$$
with $\|E_F\| \leq C$, for some constant independent of $F$.
\end{enumerate}
Then, there exists a linear extension operator 
$$E: \hbox{Lip}_0(M) \longrightarrow  \hbox{Lip}_0(X)$$
with $\|E\| \leq C$.
\end{corollary}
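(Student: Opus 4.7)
The plan is to use Proposition~\ref{prop:tuttolip} as a bridge in both directions. For each finite $F\in\mathcal{D}:=\{F\subseteq M:F\text{ finite},\ \bar x\in F\}$, hypothesis (F) combined with Proposition~\ref{prop:tuttolip} produces a $C$-Lipschitz map $\hat\delta_F:X\to\hbox{Lip}_0(F)^*$ extending the Dirac embedding $\delta:F\to\hbox{Lip}_0(F)^*$. The goal is to glue these into a single $C$-Lipschitz map $\hat\delta:X\to\hbox{Lip}_0(M)^*$ extending $\delta:M\to\hbox{Lip}_0(M)^*$, and then invoke Proposition~\ref{prop:tuttolip} one more time, in the other direction, to get the desired $E$.

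To assemble $\hat\delta$, I would pull each $\hat\delta_F(x)$ back to $\hbox{Lip}_0(M)^*$ via the restriction operator $\rho_F:g\mapsto g|_F$, which has norm $\leq 1$ from $\hbox{Lip}_0(M)$ to $\hbox{Lip}_0(F)$ since $\bar x\in F$. Setting $T_F(x):=\rho_F^*\hat\delta_F(x)\in\hbox{Lip}_0(M)^*$, the Lipschitz bound $\|\hat\delta_F\|\leq C$ (together with $\hat\delta_F(\bar x)=\delta_{\bar x}=0$ in $\hbox{Lip}_0(F)^*$) yields, uniformly in $F$, both $\|T_F(x)\|\leq Cd(x,\bar x)$ and $\|T_F(x)-T_F(y)\|\leq Cd(x,y)$ for all $x,y\in X$. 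When $x\in F$ one has $\hat\delta_F(x)=\delta_x$ on $\hbox{Lip}_0(F)$, so $T_F(x)=\delta_x$ on $\hbox{Lip}_0(M)$. Now I would direct $\mathcal{D}$ by inclusion, fix one ultrafilter $\mathcal{U}$ on $\mathcal{D}$ refining the tail filter of this direction, and for each $x\in X$ define $\hat\delta(x)\in\hbox{Lip}_0(M)^*$ as the weak$^*$ limit of $\{T_F(x)\}_{F\in\mathcal{D}}$ along $\mathcal{U}$. This limit exists by Banach--Alaoglu, since the net is contained in the weak$^*$-compact ball of radius $Cd(x,\bar x)$. Testing against a fixed $g\in\hbox{Lip}_0(M)$, the inequality $|\langle T_F(x)-T_F(y),g\rangle|\leq Cd(x,y)\|g\|_{Lip_0}$ passes to the limit, so $\hat\delta$ is $C$-Lipschitz. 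For $x\in M$, the set $\{F\in\mathcal{D}:x\in F\}$ lies in the tail filter, hence in $\mathcal{U}$, and on this set $T_F(x)=\delta_x$; therefore $\hat\delta(x)=\delta_x$. Applying the implication $(b)\Rightarrow(a)$ of Proposition~\ref{prop:tuttolip} to $\hat\delta$ then produces the sought linear extension operator $E:\hbox{Lip}_0(M)\to\hbox{Lip}_0(X)$ with $\|E\|=\|\hat\delta\|\leq C$.

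The main subtlety is the weak$^*$ ultrafilter limit: one must use the same $\mathcal{U}$ for all $x\in X$ so that $x\mapsto\hat\delta(x)$ is a well-defined map, rather than a choice of limit point depending on $x$. What makes the passage to the limit smooth is that we are limiting functionals (elements of a dual space) rather than operators or measures, so the only information needed to survive is the uniform Lipschitz bound on $T_F$, tested one Lipschitz function at a time. This is precisely why the detour through Proposition~\ref{prop:tuttolip} is effective, and mirrors the classical finite-dimensional extension argument of Theorem~A in \cite{Brudnyi} alluded to in the statement.
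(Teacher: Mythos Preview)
Your proposal is correct and follows essentially the same route as the paper: translate (F) via Proposition~\ref{prop:tuttolip} into Lipschitz maps $X\to\hbox{Lip}_0(F)^*$, push them into $\hbox{Lip}_0(M)^*$ through the adjoint of the restriction operator, pass to a weak$^*$ limit over the directed set of finite $F\ni\bar x$, and apply Proposition~\ref{prop:tuttolip} again. The only cosmetic difference is that the paper phrases the compactness step as extracting a cofinal subnet in the Tychonoff product $(\hbox{Lip}_0(M)^*,\text{weak}^*)^X$, whereas you take an ultrafilter limit pointwise in $x$; these are equivalent devices, and your remark about using a single $\mathcal{U}$ for all $x$ is exactly the content of the paper's passage to the product space.
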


\begin{proof}
Firstly, let us notice that if $R_F: \hbox{Lip}_0(M) \longrightarrow \hbox{Lip}_0(F)$ denotes the restriction operator, 
since $R_F$ is continuous and surjective the dual operator $$R_F^* : \hbox{Lip}_0(F)^* \longhookrightarrow \hbox{Lip}_0(M)^*$$ 
is continuous and injective, hence its range is a closed subspace of $\hbox{Lip}_0(M)^*$.

By the previous proposition, we can translate the hypothesis by the following: 
for every $F \subseteq M$ finite with $\bar x\in F$ there exists a Lipschitz map 
$f_{F}: X \longrightarrow \hbox{Lip}_0(F)^*$ such that the following diagram commutes
$$
\begin{array}{ccc}
F& \stackrel{\delta}{\longhookrightarrow} &  \hbox{Lip}_0(F)^*\\
\Big\downarrow&\stackrel{f_F}{\nearrow}&\\
X&&
\end{array}
$$

This tells us that 
\begin{enumerate}
\item[($i$)] $f_F(x) = \delta_x$ for every $x \in F$;
\item[($ii$)] $\|f_F(x) - f_F(y)\|_{Lip_0(F)^*} \leq C d(x, y)$ for every $x,\,y\in F$.
\end{enumerate}
Still by the previous proposition, we need to build a Lipschitz map $f:X \longrightarrow \hbox{Lip}_0(M)^*$ such that the diagram
$$
\begin{array}{ccc}
M& \stackrel{\delta}{\longhookrightarrow} &  \hbox{Lip}_0(M)^*\\
\Big\downarrow&\stackrel{f}{\nearrow}&\\
X&&
\end{array}
$$
commutes.

Let us denote by $B_F(r)=\{x^* \in \hbox{Lip}_0(F)^*: \ \|x^*\|_{Lip_0(F)^*}  \leq r\}$ be the closed ball in $\hbox{Lip}_0(F)^*$ centered at $0$ with radius $r>0$. Since $\hbox{Lip}_0(F)^*$  is finite dimensional, each ball $B_F(r)$ is a compact set, and then, by the natural embeddings $R_F^*$,  $B_F(r)$ can be seen as a compact subset of $\hbox{Lip}_0(M)^*$.

In particular, ($ii$) implies that 
$$f_F \in \prod_{x \in X} B_F(C d(x,\bar x)) =B \subseteq ( \hbox{Lip}_0(M)^*, \hbox{weak}^*)^X.$$
When we partially order the collection $\mathcal{F}$ of finite subsets of $M$ by inclusion we have a net; hence, by the compactness of
$B$ in $(\hbox{Lip}_0(M)^*, \hbox{weak}^*)^X$, there exist a cofinal subnet $\mathcal{G}$ and $f:X \longrightarrow \hbox{Lip}_0(M)^*$
such that
$$\lim_{F\in \mathcal{G}} f_F = f\qquad\text{in $(\hbox{Lip}_0(M)^*, \hbox{weak}^*)^X$}.$$
Now, by cofinality, for each $x \in M$ there exists $F \in \mathcal{G}$ such that $x \in F$. Since the convergence is in weak$^*$ topology, 
in particular we have pointwise convergence. Thus ($i$) implies that 
$$\delta_x=\lim_{F\in\mathcal{G}} f_F(x) = f(x), \quad \forall x \in M.$$

Similarly, for every $x,\, y \in X$, by ($ii$), 
\begin{align*}
\|f(x) - f(y)\|_{Lip_0(M)^*} &= \lim_F \|f_F(x) - f_F(y)\|_{Lip_0(M)^*} \\
& \leq C d(x, y).
\end{align*}
Thus, $f$ is Lipschitz and $\|f\|_{Lip} \leq C$.
 \end{proof}

\subsection{Bounded Approximation Property}

It is time to  recall Grothendieck's bounded approximation property. Let $1 \leq \lambda < \infty$.  A Banach space $X$ has the $\lambda$-bounded approximation property ($\lambda$-BAP) if, for every $\varepsilon > 0$ and every compact set $K \subseteq X$, there is a bounded finite-rank linear operator $T :X\longrightarrow X$ with
\begin{enumerate}
\item[($i$)] $\|T\| \leq \lambda$;
\item[($ii$)] $\|T(x) - x\| \leq \varepsilon$ whenever $x\in K$.
\end{enumerate}
We say that $X$ has the BAP if it has the $\lambda$-BAP, for some $1 \leq \lambda < \infty$.

Given a Banach space $E$ and $F$ a closed subspace, we recall that the annihilator $F^\perp$ of $F$ is defined as
$$F^\perp = \{ x^* \in E^*: \ x^*(x) = 0 \ \forall x \in F\}.$$ 
We say that $F$ is an $M$-ideal in $E$ if there is a linear projection $P:E^* \longrightarrow E^*$ (i.e. $P^2=P$) such that $Range P= F^\perp$ and $\|x^*\| = \|P(x^*)\| + \|x^* - P(x^*)\|$ for all $x^* \in E^*$. Thus in this case $E^* = Ker P \oplus F^\perp$. Let us recall the following useful theorem (see 
for instance \cite[p. 59]{HWW}).

\begin{theorem}[Ando-Choi-Effros]
Suppose $F$ is a $M$-ideal of a Banach space $E$, $Z$ is a separable Banach space with $\lambda$-BAP  and $T: Z \longrightarrow E/F$ be a bounded linear operator with $\|T\|=1$. Then there exists a continuous linear lifting of $T$; i.e. there is $L:Z \longrightarrow E$ with $\|L\| \leq \lambda$ such that
$$Q \circ L = T,$$
where $Q: E \longrightarrow E/F$ denotes the quotient map.
\end{theorem}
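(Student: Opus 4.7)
The approach combines three ingredients: (i) a finite-rank lifting lemma specific to quotients by $M$-ideals, (ii) the $\lambda$-BAP of $Z$ to reduce to a finite-rank setting, and (iii) a weak$^*$-compactness argument to pass to the limit.

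First I would establish a quantitative lifting lemma: if $S\colon Z\to E/F$ has finite rank with $\|S\|\leq 1$ and $\varepsilon>0$, there exists a finite-rank linear $\tilde S\colon Z\to E$ with $Q\tilde S=S$ and $\|\tilde S\|\leq 1+\varepsilon$. This is where the $M$-ideal hypothesis enters in an essential way. Since the image $S(Z)$ is finite-dimensional, one chooses coset representatives in $E$ for a basis of $S(Z)$ and then corrects the chosen representatives using the $3$-ball (equivalently, $n$-ball) characterization of $M$-ideals, so that the induced linear map picks up at most a factor $1+\varepsilon$ in norm. The $L$-projection $P\colon E^*\to F^\perp$ underlying the $M$-ideal structure is what drives this correction.

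Second, invoke the $\lambda$-BAP of $Z$: choose finite-rank operators $T_n\colon Z\to Z$ with $\|T_n\|\leq\lambda$ and $T_n(z)\to z$ in norm for every $z\in Z$. For each $n$, apply the lifting lemma to the normalized finite-rank operator $T\circ T_n/\lambda\colon Z\to E/F$ with tolerance $\varepsilon_n=1/n$, obtaining a finite-rank lift $L_n\colon Z\to E$ satisfying $QL_n=T\circ T_n$ and $\|L_n\|\leq\lambda(1+1/n)$.

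Finally, pass to the limit. Because $Z$ is separable and $(L_n)$ is uniformly norm-bounded, the weak$^*$-compactness of bounded sets in $E^{**}$ together with a diagonal extraction along a countable dense subset of $Z$ yields a subsequence $(L_{n_k})$ and a bounded linear $L\colon Z\to E^{**}$ of norm at most $\lambda$ with $L_{n_k}(z)\to L(z)$ weakly$^*$ for every $z\in Z$. Applying the bitranspose $Q^{**}\colon E^{**}\to (E/F)^{**}$ and using $T T_{n_k}(z)\to T(z)$ in norm yields $Q^{**}L=T$. The main obstacle, and the only step where non-reflexivity of $E$ causes genuine trouble, is showing that $L$ takes values in the canonical image of $E$ rather than merely $E^{**}$. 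This is precisely what the $M$-ideal hypothesis supplies: the $L$-summand decomposition of $E^*$ induces an $M$-summand of $E^{**}$ containing the canonical image of $E$, and a careful choice of the representatives in Step~1 (or equivalently, post-composition with the associated $M$-projection) forces $L(Z)\subseteq E$ without increasing the norm. Once this is established, $Q^{**}L=T$ reduces to $QL=T$, completing the argument.
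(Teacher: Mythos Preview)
The paper does not prove this theorem; it merely quotes it from the literature (Harmand--Werner--Werner, Lecture Notes in Math.\ 1547, p.~59), so there is no in-paper argument to compare against. I will therefore comment only on the soundness of your sketch.

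Your Steps~1 and~2 are on the right track: the $n$-ball property of $M$-ideals does allow one to lift finite-rank operators $Z\to E/F$ back to $E$ with norm inflation at most $1+\varepsilon$, and the $\lambda$-BAP then reduces matters to such finite-rank pieces. The difficulty is Step~3. Your proposed mechanism for forcing the weak$^*$ limit $L$ to land in $E$ rather than in $E^{**}$ does not work as stated. The $L$-decomposition $E^*=F^\perp\oplus_1 V$ dualises to an $M$-decomposition $E^{**}\cong (F^\perp)^*\oplus_\infty V^*$, and the summand $V^*$ is canonically $F^{\perp\perp}$; but the canonical image of $E$ is \emph{not} contained in either $M$-summand (only $F$ sits inside $F^{\perp\perp}$). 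Post-composing with the $M$-projection therefore does not produce an $E$-valued map, and in general a weak$^*$ cluster point of $E$-valued operators has no reason to be $E$-valued.

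The standard proof avoids $E^{**}$ altogether. One uses the BAP to write the identity on $Z$ (pointwise) as a telescoping series $\sum_n (T_{n+1}-T_n)$ with $\|T_{n+1}-T_n\|$ controlled, and then lifts the finite-rank pieces $T\circ(T_{n+1}-T_n)$ \emph{inductively}, invoking at each stage a refined form of the lifting lemma: given a previously constructed partial lift $R:Z\to E$ and a new finite-rank $S:Z\to E/F$, one can find $\tilde S:Z\to E$ with $Q\tilde S=S$ and $\|R+\tilde S\|\le \max\{\|R\|,\|QR+S\|\}+\varepsilon_n$. Summing in norm inside $E$ (with $\sum\varepsilon_n<\infty$) yields the desired lift directly, with no passage to the bidual. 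Your Step~1 is a special case of this lemma (take $R=0$); the full inductive version is what makes the series converge with the correct norm bound.
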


Since $\mathcal{F}(X)$ is a Banach space it is natural to ask whether this space has the BAP. 
We provide  a  characterization of separable metric spaces such that this happens. In particular, it follows from Theorem \ref{thm:main} that for any separable doubling metric space $(X, d, \bar x)$, $\mathcal{F}(X)$ has BAP. This was already observed in  \cite{LP} using  the notion of  gentle partition of unity. Our proof relies on an  asymptotic  argument.

For the next theorem, let $(X,d,\bar x)$ be a  pointed  separable metric space, let $\{x_n: n \in \mathbb{N}\}$ be a countable  set dense in $X$ 
with $x_0=\bar x$ and 
let  $M_n = \{x_0, x_1, \dots, x_n\}$, for all $n \in \mathbb{N}$.

\begin{theorem}\label{BAP}
Let $(X,d,\bar x)$ be  a pointed separable metric space and let $x_n,\, M_n$ as above. Then the following properties are equivalent:
\begin{enumerate}
\item[($a$)] $\mathcal{F}(X)$ has $K$-BAP;
\item[($b$)] there exists $K \geq 1$ such that $X$ admits a {\em asymptotic} $K$-random projection on $(M_n)_n$; 
i.e., for every $n \in \mathbb{N}$ there exist $\{\upsilon_x^n: x \in X\} \subseteq \mathcal{F}(M_n)$ such that:
\begin{enumerate}
\item[($i$)] $\lim_n W_1(\upsilon_x^n, \delta_x) = 0$ for every $x \in \bigcup_n M_n$;
\item[($ii$)] $W_1(\upsilon_x^n, \upsilon_y^n) \leq K d(x, y)$, for every $x,\, y \in X$ and $n \in \mathbb{N}$.
\end{enumerate}
\end{enumerate}
\end{theorem}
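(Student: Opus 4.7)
The strategy, in both directions, is to translate between the asymptotic projections $\{\upsilon_x^n\}$ and sequences of uniformly bounded, finite-rank operators $T_n:\mathcal{F}(X)\to\mathcal{F}(X)$ converging pointwise to the identity. Two tools will be used repeatedly: first, the canonical embedding $\mathcal{F}(M_n)\hookrightarrow\mathcal{F}(X)$ is an isometry (being preadjoint to the quotient map $\hbox{Lip}_0(X)\to\hbox{Lip}_0(M_n)$ furnished by McShane); second, the universal property of the free space lifts any $K$-Lipschitz $f:X\to Z$ with $f(\bar x)=0$ to a bounded linear $\hat f:\mathcal{F}(X)\to Z$ of norm $\|f\|_{Lip}$, sending $\delta_x$ to $f(x)$.

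For $(b)\Rightarrow(a)$, given asymptotic projections $\upsilon_x^n\in\mathcal{F}(M_n)$, I would form $f_n(x):=\upsilon_x^n-\upsilon_{\bar x}^n$. By (ii) it is $K$-Lipschitz as a map $X\to\mathcal{F}(M_n)$ and vanishes at $\bar x$; the universal property lifts it to a bounded linear $T_n:\mathcal{F}(X)\to\mathcal{F}(M_n)$ with $\|T_n\|\le K$, and composition with the isometric inclusion produces finite-rank operators $T_n:\mathcal{F}(X)\to\mathcal{F}(X)$ still of norm at most $K$. Using that $\delta_{\bar x}=0$ in $\mathcal{F}(X)$, condition (i) forces $T_n\delta_{x_i}=\upsilon_{x_i}^n-\upsilon_{\bar x}^n\to\delta_{x_i}$ for every $x_i$ in the dense set $\bigcup_n M_n$; the uniform norm bound together with density of $\spn\{\delta_{x_i}\}$ in $\mathcal{F}(X)$ then upgrades this to $T_n\mu\to\mu$ for all $\mu\in\mathcal{F}(X)$, and a standard equicontinuity argument converts pointwise to uniform convergence on compact sets, i.e., to the $K$-BAP.

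For $(a)\Rightarrow(b)$, separability of $\mathcal{F}(X)$ together with the $K$-BAP produces a sequence $T_n$ of finite-rank operators with $\|T_n\|\le K$ and $T_n\mu\to\mu$ for every $\mu$. The technical heart is to perturb each $T_n$ into an operator $\tilde T_n$ whose range lies in some $\mathcal{F}(M_{N(n)})$, exploiting density of $\bigcup_N\mathcal{F}(M_N)$ in $\mathcal{F}(X)$. I would factor $T_n=J_n\circ S_n$, with $J_n:R_n\hookrightarrow\mathcal{F}(X)$ the inclusion of the finite-dimensional range $R_n$; choosing an Auerbach basis of $R_n$ and approximating each basis vector to tolerance $\delta_n/\dim R_n$ by an element of $\mathcal{F}(M_{N(n)})$ (for $N(n)$ sufficiently large), one obtains $\tilde J_n:R_n\to\mathcal{F}(M_{N(n)})$ with $\|J_n-\tilde J_n\|<\delta_n$, whence $\tilde T_n:=\tilde J_n\circ S_n$ has range in $\mathcal{F}(M_{N(n)})$, $\|\tilde T_n\|\le(1+\delta_n)K$, and $\|T_n-\tilde T_n\|\le K\delta_n$, with $\delta_n\to 0$. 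Taking $N(n)$ strictly increasing and setting $\upsilon_x^m:=\tilde T_{k(m)}\delta_x$ with $k(m):=\max\{k:N(k)\le m\}$, one verifies $\upsilon_x^m\in\mathcal{F}(M_m)$, condition (i) from the pointwise convergence $\tilde T_n\mu\to\mu$, and condition (ii) with the enlarged constant $K(1+\sup_n\delta_n)$.

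The main obstacle is exactly this perturbation: forcing the range of a finite-rank operator on $\mathcal{F}(X)$ into a prescribed subspace $\mathcal{F}(M_N)$ without appreciably inflating its operator norm, while keeping the perturbed operator close enough to the original for the approximation-of-identity property to survive. The Auerbach basis controls this linear-algebraically, and the isometry of $\mathcal{F}(M_n)\hookrightarrow\mathcal{F}(X)$ transfers the operator norm bound into the needed $W_1$-estimate. The inevitable $O(\delta_n)$ loss means the equivalence is read qualitatively --- $\mathcal{F}(X)$ has BAP if and only if some asymptotic $K$-random projection exists --- rather than with matching constants; the direction $(b)\Rightarrow(a)$ preserves $K$ exactly.
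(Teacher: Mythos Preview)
Your argument is correct in both directions, but the route for $(a)\Rightarrow(b)$ is genuinely different from the paper's. The paper sets up the Banach space $\mathcal{C}$ of convergent sequences $(\mu_n)_n$ with $\mu_n\in\mathcal{F}(M_n)$, shows that the null sequences $\mathcal{C}_0$ form an $M$-ideal, identifies $\mathcal{C}/\mathcal{C}_0\cong\mathcal{F}(X)$, and then invokes the Ando--Choi--Effros lifting theorem to lift $\mathrm{Id}_{\mathcal{F}(X)}$ to $L:\mathcal{F}(X)\to\mathcal{C}$ with $\|L\|\le K$; the projections $\upsilon_x^n=\pi_n(L\delta_x)$ then satisfy (i) and (ii) with the \emph{exact} constant $K$. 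Your Auerbach-basis perturbation is considerably more elementary---no $M$-ideal machinery, no external lifting theorem---but as you note it costs an $\epsilon$ in the constant, so it proves the qualitative equivalence rather than the sharp one. For $(b)\Rightarrow(a)$ the two proofs are close in spirit: the paper builds $S_n=E_n\circ R_n$ on $\hbox{Lip}_0(X)$, obtains only weak$^*$ (equivalently WOT on the predual) convergence, and then appeals to Mazur to pass to convex combinations converging in SOT. Your version, working directly on $\mathcal{F}(X)$ via the universal property, lands immediately in norm convergence on the dense set $\{\delta_{x_i}\}$ and thus skips the Mazur step; your subtraction of $\upsilon_{\bar x}^n$ is also a genuine correction, since condition (b) does not force $\upsilon_{\bar x}^n=0$, whereas the paper's formula $E_n(f)(x)=\langle\upsilon_x^n,f\rangle$ tacitly needs this to land in $\hbox{Lip}_0(X)$.
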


\begin{proof} Let us denote by
$$R_n: \hbox{Lip}_0(X) \longrightarrow   \hbox{Lip}_0(M_n)$$
the restriction map. Then $R_n$ is a bounded linear operator with $\|R_n\| \leq 1$.  
Moreover, the operator $\hat{R}_n: \mathcal{F}(M_n)	\hookrightarrow \mathcal{F}(X)$ defined by
$$
\hat{R}_n\mu(f)=\mu(R_n f)
$$
satisfies $\hat{R}_n^* = R_n$ and is an isometry, since any function $f\in \hbox{Lip}_0(M_n)$ is the restriction of a function
in $\hbox{Lip}_0(X)$ with the same Lipschitz constant. Hence, in the sequel, we naturally consider $\mathcal{F}(M_n)$ as a closed subspace of $\mathcal{F}(X)$.

$(a) \Rightarrow (b)$ We start with a standard argument, whose idea relies on the proof of Ando-Choi-Effros' theorem itself. 

Let us consider the space
$$\mathcal{C}= \{(\mu_n)_n: \ \mu_n \in \mathcal{F}(M_n), \  (\mu_n)_n \ \hbox{is norm convergent in} \ \mathcal{F}(X) \}$$
endowed with the supremum norm; here we use the canonical embeddings of $ \mathcal{F}(M_n)$ into $\mathcal{F}(X)$
which are predual to the restriction operator $R_n$.
Let us denote by $\mathcal{C}_0$ the subspace of all sequences $(\mu_n)_n$ converging to zero in $\mathcal{F}(X)$. Then, it is standard to see that  $\mathcal{C}_0$ is an $M$-ideal of $\mathcal{C}$.  According to Theorem~2.2(iv) of \cite{HWW}, this can be proved if
we check that $\mathcal{C}_0$  have the so-called 3-ball property in $\mathcal{C}$, namely for any $(\mu_n)_n$ in the unit ball of
$\mathcal{C}$, any $\varepsilon>0$ and any $(\nu^i_n)_n$ in the unit ball of $\mathcal{C}_0$, $i=1,2,3$, there is $(\tilde\mu_n)_n\in\mathcal{C}_0$ with
$$
\|(\mu_n)_n+(\nu^i_n)_n-(\tilde\mu_n)_n\|\leq 1+\varepsilon.
$$
It is easy to check that if we define $\tilde\mu_n=\mu_n$ for $1\leq n\leq N$ and $\tilde\mu_n=0$ for $n>N$ this property
holds, if $N$ is large enough.


Let us observe that the limit operator $L:\mathcal{C}/\mathcal{C}_0\hookrightarrow\mathcal{F}(X)$ induces
an isomorphism between $\mathcal{C}/\mathcal{C}_0$ and $\mathcal{F}(X)$. Indeed, since we are considering the quotient
with respect to $\mathcal{C}_0$, it is clear that $L$ is injective, and since $\mathcal{C}$ is endowed with the sup norm
one has $\|L\|\leq 1$.
%
%
%
It remains to show that any element in $\mu \in \mathcal{F}(X)\setminus\{0\}$ is the limit of $ (\mu_n)_n \in \mathcal{C}$ with
$\sup_n\|\mu_n\|=\|\mu\|$.
If $\mu \in \hbox{span}\{ \delta_x: \ x \in X\}$, with $\mu= \sum_{i=1}^k a_i \delta_{x_i}$, then by density of $\cup_n M_n$ in $X$, for each $i=1, \ldots, k$ 
we can find $x^n_i\in M_n$ convergent as $n\to\infty$ to $x_i$. Then, using the definition of the norm in $\mathcal F(X)$, it is easily seen that
$$
\tilde\mu_n:= \sum_{i=1}^k a_i \delta_{x^n_i}\in\mathcal F(M_n)
$$
converge to $\mu$, so that if we define $\mu_n=\|\mu\|\tilde\mu_n/\|\tilde\mu_n\|$ for $n$ large enough and $\mu_n=\delta_{x_0}$
for finitely many $n$, we are done.
Since $\mathcal{F}(X)$ is the completion of the set of Borel measures on X with finite support, if $\mu \in \mathcal{F}(X)$, there exists $(\nu_k)_k \subseteq \hbox{span}\{ \delta_x: \ x \in X\}$ such that $\nu_k \longrightarrow \mu$ in $\mathcal{F}(X)$. By a diagonal argument, since all $\nu_k$ can
be approximated, we can find a strictly increasing family of indeces $n_k\to\infty$ and $\sigma_k\in\mathcal F(M_{n_k})$ such that 
$\sigma_k\to\mu$ in $\mathcal{F}(X)$. Since the spaces $\mathcal F(M_n)$ are nested, if we define
$$
\mu_n:=\sigma_k\quad\text{for $n_k\leq n<n_{k+1}$,}
$$
we obtain $\mu_n\in\mathcal F(M_n)$ and $\mu_n\to\mu$ in $\mathcal{F}(X)$.

Thus, identifying $\mathcal{C}/\mathcal{C}_0$ with $\mathcal F(X)$,
we are in position to apply  Ando-Choi-Effros' theorem for the identity operator $Id_{\mathcal{F}(X)}$ on $\mathcal{F}(X)$. 
There exists a bounded linear operator
$$L:\mathcal{F}(X) \longrightarrow\mathcal{C}$$ 
with $\|L\| \leq K$ (since $\mathcal{F}(X)$ has $K$-BAP) such that
\begin{equation}\label{right inverse}
Q \circ L = Id_{\mathcal{F}(X)},
\end{equation}
where $Q: \mathcal{C}\longrightarrow  \mathcal{F}(X)$ denotes the quotient map.\smallskip

For each $n \in \mathbb{N}$, let
$$\pi_n: \mathcal{C} \longrightarrow \mathcal{F}(M_n)$$
be the canonical $n$-th coordinate projection and consider the composition
$$\pi_n \circ L : \mathcal{F}(X) \longrightarrow \mathcal{F}(M_n).$$
Then $\pi_n \circ L$ is a bounded linear operator with $\|\pi_n \circ L\| \leq \lambda$.

Now let us define  an asymptotic $K$-random projection on $(M_n)_n$ in the following way: 
$$\upsilon_x^n = (\pi_n \circ L) (\delta_x), \quad \forall x \in X, \ n \in \mathbb{N}.$$
Then $\{\upsilon_x^n: x \in X\} \subseteq \mathcal{F}(M_n)$  and satisfy
\begin{enumerate}
\item[($i$)] $\lim_n W_1(\upsilon_x^n, \delta_x) = 0$, for every $x \in \bigcup_n M_n$;
\item[($ii$)] $W_1(\upsilon_x^n , \upsilon_y^n)  \leq K d(x, y)$ for all $x, y \in X$. 
\end{enumerate}
For ($i$), let $n_0 \in \mathbb{N}$ such that $x \in M_{n_0}$, thus $\delta_x \in \mathcal{F}(M_n)$ for all $n \geq n_0$. Then, by the construction above, we can identify $\delta_x \in \mathcal{F}(X)$ in the quotient  space with the equivalence class  generated by
$$ (0, 0, \dots, \underbrace{\delta_x}_{n_0-th}, \delta_x, \delta_x , \dots) \in  \mathcal{C}/ \mathcal{C}_0.$$
Thanks to (\ref{right inverse}), the sequences 
$$ 
(0, 0, \dots, \underbrace{\delta_x}_{n_0-th}, \delta_x, \delta_x , \dots) \ \hbox{and} \ (\pi_1(L(\delta_x)), \dots, \pi_n(L(\delta_x)), \dots)
$$
have to be in the same equivalence class; i.e. 
$$\lim_n \pi_n( L(\delta_x)) = \delta_x \ \hbox{in} \ \mathcal{F}(X).$$
Therefore
\begin{align*}
\lim_n W_1(\upsilon_x^n, \delta_x) &=  \lim_n \sup_{\|g\|_{Lip_0(X)} \leq 1} \left\{\upsilon_x^n(g) - g(x)\right\} \\
&= \lim_n \sup_{\|g\|_{Lip_0(X)} \leq 1}  \langle g, \pi_n\circ L(\delta_x)  - \delta_x\rangle\\
&= \lim_n \|\pi_n( L(\delta_x)) - \delta_x \|_{\mathcal{F}(X)}=0.
\end{align*}

For ($ii$), let $x, \,y \in X$. Then,
\begin{align*}
W_1(\upsilon_x^n , \upsilon_y^n) &= \sup_{\|g\|_{Lip_0(M_n)}\leq 1} \left\{\upsilon_x^n(g) - \upsilon_y^n(g)\right\}\\
&=  \sup_{\|g\|_{Lip_0(M_n)}\leq 1} \left\{ \pi_n\circ L (\delta_x)(g)  - \pi_n\circ L (\delta_y)(g)\right\}\\
&=  \|\pi_n\circ L(\delta_x)-\pi_n\circ L(\delta_y)\|_{\mathcal{F}(M_n)}\\
& \leq \lambda \|\delta_x-\delta_y\|_{\mathcal{F}(X)}= \lambda d(x, y).
\end{align*}
$(b) \Rightarrow (a)$ Let us denote by  $\{\upsilon_x^n:\ x \in X\} \subseteq \mathcal{F}(M_n)$ be an asymptotic $K$-random
projection on $(M_n)_n$. As in Theorem~\ref{extension thm}, one can define 
$E_n: \hbox{Lip}_0(M_n) \longrightarrow   \hbox{Lip}_0(X)$ by
$$E_n(f)(x) = \langle\upsilon_x^n,f\rangle.$$
Then $E_n$ is a bounded linear  operator with $\|E_n\| \leq K$. Finally, we define
$$S_n: \hbox{Lip}_0(X) \longrightarrow \hbox{Lip}_0(X)$$
simply by composing
$$S_n = E_n \circ R_n.$$
Since $\hbox{Lip}_0(M_n)$ is a finite dimensional space, $(S_n)_n$ is a sequence of finite-rank linear operators with $\|S_n\| \leq K$,  which are weak$^*$-weak$^*$ continuous on bounded subsets of $\hbox{Lip}_0(X)$.

Let us fix $x \in X$, $f \in  \hbox{Lip}_0(X)$ and $\varepsilon>0$. Since $\{x_n: n \in \mathbb{N}\}$ is dense, there exists $n_0 \in \mathbb{N}$ such that $d(x, x_{n_0}) \leq \varepsilon$. Thanks to the hypothesis ($i$) on $\upsilon_x^n$'s, we can find $n_1 \geq n_0$ such that for all $n \geq n_1$, $W_1(\upsilon_{x_{n_0}}^n, \delta_{x_{n_0}}) \leq \varepsilon$.

Then, if $n \geq n_1$ one has
\begin{align*}
&|f(x) - f(x_{n_0})| \leq \varepsilon \|f\|_{Lip},\\
& |S_n(f)(x_{n_0}) - f(x_{n_0})| \leq \|f\|_{Lip} W_1(\upsilon_{x_{n_0}}^n, \delta_{x_{n_0}}) \leq  \varepsilon  \|f\|_{Lip};\\
& |S_n(f)(x) - S_n(f)(x_{n_0})| \leq \|f\|_{Lip} W_1(\upsilon_x^n, \upsilon_{x_{n_0}}^n) \leq \varepsilon  K\|f\|_{Lip} ,
\end{align*}
which imply that
$$|S_n(f)(x) - f(x)| \leq (2 + K) \varepsilon \|f\|_{Lip}.$$
This argument shows that $(S_n(f))_n$ pointwise converges  to $f$.

Let 
$$T_n: \mathcal{F}(X)  \longrightarrow \mathcal{F}(X)$$
 bounded linear operators such that  $T_n^* = S_n$. It turns out that $T_n$ is a finite rank operator with $\|T_n\| \leq K$, for every $n\in \mathbb{N}$.  Moreover, $(T_n)_n$ converges to the identity w.r.t. the weak operator topology on  $\mathcal{F}(X) $. 
 
Since $(X, d)$ is separable and the Dirac measure map $\delta: X \longrightarrow \mathcal{F}(X)$ is an isometry with the span of the range dense, we also have that $\mathcal{F}(X)$ is separable. As in \cite[Proposition 2.1]{K}, by using the classical Mazur's theorem, convex combinations of the operators $T_n$ and a standard diagonal argument yield 
the existence of a sequence of finite-rank operators converging to the identity for the strong operator topology on $\mathcal{F}(X)$. 
This sequence of finite rank operators will be also uniformly bounded in norm by $K$, since it arises from convex combinations of uniformly bounded operators. It is easy to check that the existence of a family of operators with these properties implies that $\mathcal{F}(X)$ satifies the
$K$-BAP.
\end{proof}

\section{Conclusion}

We close with a few remarks and unsolved problems. We believe that $K$-random projections can be a useful tool to understand better 
the geometry of the space $\hbox{Lip}(X)$. On the basis of Theorem~\ref{thm:main} it is natural to ask:

\begin{problem}
Give a direct proof that any doubling metric space admits a $K$-random projection on any closed subspace.
\end{problem} 

Example~\ref{main example} shows that not all $K$-random projections are induced by a $K$-gentle partition. 
This leads to the following question.

\begin{problem}
Does there exist a metric space $(X,d)$ and a closed subspace $M$ admitting $K$-random projections, but with no
$K$-gentle partition of unity?  
\end{problem}

Recall that $Z$ has a {\em Schauder basis} if there exists a sequence $(z_n)_n$ in $Z$ such that any $z\in Z$ can be uniquely written as the sum of a norm convergent series $\sum_n a_nz_n$, for some  sequence $(a_n)_n$ of scalars.
It is well known that a Banach space $Z$ admits a Schauder basis if and only if there exists a sequence 
of uniformly bounded linear projections $(P_n)_{n \in \mathbb{N}}$, $P_n: Z \longrightarrow Z$ such that
\begin{enumerate}
\item[($i$)] $P_m \circ P_n = P_{\min\{m, n\}}$ for all $n, \,m \in \mathbb{N}$;
\item[($ii$)] dim$P_1(Z)<\infty$ and $\hbox{\rm dim}(P_{n+1}-P_n)(X)=1$
for all $n \in \mathbb{N}$;
\item[($iii$)] $\overline{\bigcup_{n \in \mathbb{N}} P_n(Z)} = Z$.
\end{enumerate}
In particular, setting $Z=\mathcal{F}(X)$, if one is able to find a uniformly bounded sequence of finite rank projections 
$$T_n: \hbox{Lip}_0(X) \longrightarrow \hbox{Lip}_0(X) $$
which are weak$^*$-weak$^*$ continuous such that $T_m \circ T_n = T_{\min\{m, n\}}$ for all $n,\, m \in \mathbb{N}$, 
which converges to the identity on $\hbox{Lip}_0(X)$, such that  $\hbox{\rm dim}(T_{n+1}-T_n)(\hbox{Lip}_0(X))=1$, then such a sequence gives rise to a sequence $P_n: \mathcal{F}(X) \longrightarrow \mathcal{F}(X)$ with the properties $(i),\, (ii),\,(iii)$ above.

Following Theorem~\ref{BAP}, where we related the asymptotic $K$-random projections to the $K$-BAP property,
one can then ask the following:

\begin{problem}
Let  $(X,d,\bar x)$ be a separable pointed metric space  which admits a $K$-random projection w.r.t. $M_n$ for each $n \in \mathbb{N}$, where $M_n$ are  as in Theorem~\ref{BAP}. 
Does $\mathcal{F}(X)$ admit a Schauder basis? 
\end{problem}

A positive answer to this question would imply that for any doubling metric space $(X,d)$ the space $\mathcal{F}(X)$ admits a Schauder basis.

\end{document}